\newcommand{\E}{\mathbb{E}}
\newcommand{\PP}{\mathbb{P}}
\newcommand{\R}{\mathbb{R}}
\newcommand{\N}{\mathbb{N}}
\newtheorem{theo}{Theorem}[section]
\newtheorem{cor}[theo]{Corollary}
\newtheorem{rem}[theo]{Remark}
\newtheorem{propo}[theo]{Proposition}
\newtheorem{lemma}[theo]{Lemma}
\begin{document}

\title{Analysis of some splitting schemes for the stochastic Allen-Cahn equation}

%\author{Ludovic Goudenège \& Charles-\'Edouard Bréhier}

\author{Charles-Edouard Br\'ehier}
\address{Univ Lyon, CNRS, Université Claude Bernard Lyon 1, UMR5208, Institut Camille Jordan, F-69622 Villeurbanne, France}
\email{brehier@math.univ-lyon1.fr}

\author{Ludovic Gouden\`ege}
\address{Université Paris-Saclay, CNRS - FR3487, Fédération de Mathématiques de CentraleSupélec, CentraleSupélec, 3 rue Joliot Curie, F-91190 Gif-sur-Yvette, France}
\email{ludovic.goudenege@math.cnrs.fr}

\keywords{Stochastic Partial Differential Equations, splitting schemes, Allen-Cahn equation}
\subjclass{60H15;65C30;60H35}

\date{}

\begin{abstract}
%{\color{blue}
%}
We introduce and analyze an explicit time discretization scheme for the one-dimensional stochastic Allen-Cahn, driven by space-time white noise. The scheme is based on a splitting strategy, and uses the exact solution for the nonlinear term contribution.

We first prove boundedness of moments of the numerical solution. We then prove strong convergence results: first, $L^2(\Omega)$-convergence of order almost $1/4$, localized on an event of arbitrarily large probability, then convergence in probability of order almost $1/4$.

The theoretical analysis is supported by numerical experiments, concerning strong and weak orders of convergence.
\end{abstract}

\maketitle

\section{Introduction}\label{sec:intro}

In this article, we define and study new numerical schemes for the time discretization of the following Stochastic Partial Differential Equation (SPDE),
\[
\frac{\partial u(t,\xi)}{\partial t}=\frac{\partial^2 u(t,\xi)}{\partial \xi^2}+u(t,\xi)-u(t,\xi)^3+\dot{W}(t,\xi)
\]
driven by Gaussian space-time white noise, with $\xi\in(0,1)$ a one-dimensional space variable -- and homogeneous Dirichlet boundary conditions.

%\subsection{The Allen-Cahn equation}
The Allen-Cahn equation has been introduced \cite{AllenCahn:79} as a model for a two-phase system driven by the Ginzburg-Landau energy
\[
\mathcal{E}(u) = \int |\nabla u|^{2}+\frac{1}{\varepsilon^{2}} V(u),
\]
where $u$ is the ratio of the two species densities, and $V=(u^2-1)^2$ is a double well potential. The first term in the energy models the diffusion of the interface between the two pure phases, and the second one pushes the solution to two possible stable states $\pm 1$ (named the pure phases, i.e. minima of $V$). The behavior of the interface in the regime $\varepsilon\to 0$ is described in terms of mean curvature flow, see for instance~\cite{ChenHilhorstLogak:97, ChenGigaGoto:91, EvansSonerSouganidis:92, EvansSpruck:91, EvansSpruck:92}. 

The stochastic version of the Allen-Cahn equation models the effect of thermal perturbations by a additional noise term, see for instance~\cite{FarisJona-Lasinio:82, Funaki:95, KarsoulakisKossiorisLakkis:07}. The behavior as $\varepsilon \rightarrow 0$ has been studied for instance in~\cite{Funaki:95} in dimension 1 (with space-time white noise), and \cite{Funaki:99, Weber:10} in higher dimension (with more regular noise).% In order to study this limiting process, many technics are promising and can be found in \cite{DirrLuckhausNovaga:01,LionsSouganidis:98,Yip:98} 

The stochastic Allen-Cahn equation is also a popular model for the study and simulation of rare events in infinite dimensional stochastic system, see for instance~\cite{BrehierGazeauGoudenegeRousset:15, KohnOttoReznikoffVanden-Eijnden:07, RollandBouchetSimonnet:16, Vanden-EijndenWeare:12}.

\bigskip

In this work, our aim is to study numerical schemes for the stochastic Allen-Cahn equation. In the theoretical analysis, we only focus on the temporal discretization. To perform numerical simulations, a spatial discretization is required: we use a standard finite difference method.

Numerical schemes for SPDEs have been extensively studied in the last two decades, see for instance the monographs~\cite{JentzenKloeden, Kruse, LordPowellShardlow}. Compared with the numerical discretization of Stochastic Differential Equations (SDEs), both temporal and spatial discretization are required. In addition, the temporal regularity of the solutions of SPDEs depends heavily on the spatial regularity of the noise perturbation, and this affects orders of convergence. For instance, consider equations with globally Lipschitz continuous coefficients. For SDEs, the solutions are H\"older continuous with exponents $\alpha<1/2$, and the Euler-Maruyama scheme has in general a strong order of convergence $1/2$ and a weak ordre $1$: see for instance the monographs~\cite{KloedenPlaten:92, Milstein:95}. For SPDEs, driven by space-time white noise, the solutions are only H\"older continuous with exponents $\alpha<1/4$, and (explicit or implicit) Euler type schemes only have strong order $1/4$ and weak order $1/2$. We recall that strong convergence usually refers to convergence in mean-square sense: see for instance
~\cite{DavieGaines:01, Gyongy:98, Gyongy:99, GyongyMillet:05, GyongyMillet:07, GyongyNualart:95, GyongyNualart:97, Jentzen:11, JentzenKloedenWinkel:11, JentzenKloeden:09, KloedenShott:01, LordTambue:13, Printems:01, Wang:17}.
 Weak convergence refers to convergence in distribution: recent contributions are
~\cite{BrehierDebussche:17, ConusJentzenKurniawan:14, Debussche:11, JentzenKurniawan:15}.

For discretization of equations such as the stochastic Allen-Cahn equation, the main difficulty is the polynomial coefficient, which is not globally Lipschitz continuous. Standard schemes with explicit discretization of such coefficients cannot be applied. Defining efficient numerical schemes for stochastic equations with non globally Lipschitz continuous coefficients is delicate: see for instance the recent works~\cite{HutzenthalerJentzenKloeden:12}, and the monograph~\cite{HutzenthalerJentzen:15}, and references therein. The general methodology has been recently applied to various examples of SPDEs: see for instance~\cite{BeckerJentzen:16, HutzenthalerJentzenSalimova:16, JentzenPusnik:15, JentzenPusnik:16, JentzenSalimovaWelti:17}. We also mention~\cite{GyongySabanisSiska:16} for the analysis of a tamed Euler scheme for a class of SPDEs.

The case of the Allen-Cahn equation has been treated in the recent work~\cite{BeckerGessJentzenKloeden:17}, with a scheme based on an exponential integrator and a tamed discretization of the nonlinear coefficient. References~\cite{KovacsLarssonLindgren:15_1, KovacsLarssonLindgren:15_2} present analysis of implicit schemes. Finally, a Wong-Zakai approximation has been considered in~\cite{LiuQiao:17}.

\bigskip

In this work, we introduce new schemes, based on a splitting (also referred to as splitting-up) strategy, see for instance~\cite{BessaihBrzezniakMillet:14, BessaihMillet:18, GyongyKrylov:03_2,GyongyKrylov:03} in the SPDE case, and~\cite{JeongLeeLeeShinKim:16, LeeLee:15} for the deterministic Allen-Cahn equation. Indeed, the solution of the ordinary differential equation $\dot{z}=z-z^3$ has a known explicit expression. The splitting strategy then consists in solving separately the contributions of the linear coefficient with the noise, and of the nonlinear coefficient. Several schemes may be chosen to treat the first contribution: exponential and linear implicit Euler integrators may be used.

We mostly focus on the following scheme (other schemes will be defined when performing numerical simulations). Let us write the stochastic Allen-Cahn equation as an evolution equation in the sense of Da Prato-Zabczyk,~\cite{DaPratoZabczyk:14}:
\[
dX(t)=AX(t)dt+\Psi_0(X(t))dt+dW(t),
\]
with $\Psi_0(x)=x-x^3$. Then the numerical scheme, with time-step size $\Delta t>0$, is defined by the recursion:
\begin{equation*}
\begin{cases}
Y_{n}=\Phi_{\Delta t}(X_n),\\
X_{n+1}=S_{\Delta t}X_n+S_{\Delta t}\bigl(W((n+1)\Delta t)-W(n\Delta t)\bigr),
\end{cases}
\end{equation*}
where $S_{\Delta t}=(I-\Delta t A)^{-1}$ corresponds to the choice of a linear implicit Euler integrator, and
\[
\Phi_{\Delta t}(z)=\frac{z}{\sqrt{z^2+(1-z^2)e^{-2\Delta t}}}.
\]
Observe that the discrete-time process $\bigl(X_n\bigr)_{n\in\N}$ may be interpreted as the solution of a standard linear implicit Euler scheme for a modified SPDE, with nonlinear coefficient $\Psi_{\Delta t}(z)=\Delta t^{-1}(\Phi_{\Delta t}(z)-z)$, in the spirit of~\cite{HighamMaoStuart:02}. The coefficient $\Psi_{\Delta t}$ satisfies the same type of one-sided Lipschitz condition as $\Psi_0$, uniformly with respect to $\Delta t$.

\bigskip

Our first contribution is the analysis of the splitting scheme introduced above. We first prove moment bounds, uniform with respect to $\Delta t$. Our main result, Theorem~\ref{theo:main}, is a strong convergence result, with order of convergence almost $1/4$, localized on an event of arbitrarily large probability, in the spirit of~\cite{BessaihBrzezniakMillet:14}. We also state and prove several straightforward consequences of Theorem~\ref{theo:main}, related with other types of convergence:
\begin{itemize}
\item convergence in mean-square sense, with no order of convergence,
\item convergence in probability of order almost $1/4$, in the spirit of~\cite{Printems:01},
\item weak convergence, with order almost $1/4$, rejecting exploding trajectories in the spirit of~\cite{MilsteinTretyakov:05}.
\end{itemize} 
We also provide numerical simulations to illustrate the rates of convergence of the scheme introduced above, and compare with a few variants.

These numerical experiments lead us to conjecture that our results may be improved as follows. First, we conjecture that the strong order is equal to $1/4$, in the standard sense, {\it i.e.} that it is possible to get rid of the localization in Theorem~\ref{theo:main}. But we expect the analysis to be considerably more complex and similar to~\cite{BeckerGessJentzenKloeden:17}. Second, we conjecture that the weak order is equal to $1/2$, when considering sufficiently smooth test functions. Again the analysis requires more complex arguments. We plan to investigate these questions in future works.

\bigskip

This article is organized as follows. The setting is introduced in Section~\ref{sec:setting}. The splitting schemes are introduced in section \ref{sec:num-sche}. Results concerning the auxiliary flow map $\Phi_{\Delta t}$ are given in Section~\ref{sec:aux_flow}. A priori bounds on the moments of the numerical solutions are given in Section~\ref{sec:moments}. Our main results are stated in Section~\ref{sec:convergence_statements}, and their proofs are given in Sections~\ref{sec:convergence_proof1} and~\ref{sec:convergence_proof2}. Numerical experiments to investigate strong and weak orders of convergence are reported in Section~\ref{sec:num-expe}.
%Finally we post-pone in Appendix \ref{appendix} some proofs of useful lemmas

\section{Setting}\label{sec:setting}

We work in the standard framework of stochastic evolution equations with values in infinite dimensional separable Hilbert and Banach spaces. We refer for instance to~\cite{Cerrai:01, DaPratoZabczyk:14} for details. Let $H=L^2(0,1)$, and $E=\mathcal{C}([0,1])$. We use the following notation: for $x_1,x_2\in H$, $x\in E$,
\[
\langle x_1,x_2\rangle =\int_{0}^{1}x_1(\xi)x_2(\xi)d\xi~,\quad \|x_1\|_H=\bigl(\int_0^1 x_1(\xi)^2d\xi\bigr)^{\frac{1}{2}}~,\quad |x|_E=\underset{\xi\in[0,1]}\max |x(\xi)|.
\]
To simplify notation, we often write $\|x\|=\|x\|_H$ and $|x|=|x|_E$.

\subsection{Assumptions}

\subsubsection{Linear operator}

Let $A$ denote the unbounded linear operator on $H$, with
\[
\begin{cases}
D(A)=H^2(0,1)\cap H_0^1(0,1)\\
Ax=\partial_\xi^2x,~\forall~x\in D(A).
\end{cases}
\]
It is well-known that $A$ generates a strongly continuous semi-group, both on $H$ and on $E$. We use the notation $\bigl(e^{tA}\bigr)_{t\ge 0}$. More precisely, it is an analytic semi-group.

Finally, let $e_n=\sqrt{2}\sin(n\pi\cdot)$ and $\lambda_n=n^2\pi^2$, for $n\in\N$. Note that $Ae_n=-\lambda_n e_n$, and that $\bigl(e_n\bigr)_{n\in\N}$ is a complete orthonormal system of $H$.

\subsubsection{Wiener process}

Let $\bigl(\Omega,\mathcal{F},\PP\bigr)$ denote a probability space, and consider a family $\bigl(\beta_n\bigr)_{n\in\N}$ of independent standard real-valued Wiener processes. Then set
\[
W(t)=\sum_{n\in\N}\beta_n(t)e_n.
\]
This series does not converge in $H$. However, if $\tilde{H}$ is an Hilbert space, and $L\in\mathcal{L}_2(H,\tilde{H})$ is a linear, Hilbert-Schmidt, operator, then $LW(t)$ is a Wiener process on $\tilde{H}$, centered and with covariance operator $LL^{\star}$.

\subsubsection{Stochastic convolution}

The linear equation, with additive noise,
\[
dX(t)=AX(t)dt+dW(t),~Z(0)=0,
\]
admits a unique (global) mild solution in $H$
\[
X(t)=\int_{0}^{t}e^{(t-s)A}dW(s)=W^A(t),
\]
called the stochastic convolution.

Moreover, this process is continuous with values in $E$. Moment estimates are satisfied: for all $T\in(0,\infty)$ and all $p\in\N$, there exists $C_{p}(T)\in(0,\infty)$ such that
\begin{equation}\label{eq:moment_stochastic-convolution}
\E\bigl[\sup_{0\le t\le T}\|W^A(t)\|_H^{2p}+\sup_{0\le t\le T}|W^A(t)|_E^{2p}\bigr]\le C_{p}(T).
\end{equation}

\subsection{Allen-Cahn equation}

The potential energy function $V:\R\to\R$ is defined by
\[
V(z)=\frac{z^4}{4}-\frac{z^2}{2}.
\]
Then the function $\Psi_0=-V'$ satisfies a one-sided Lipschitz condition: for all $z_1,z_2\in\R$,
\[
\bigl(\Psi_0(z_2)-\Psi_0(z_1)\bigr)\bigl(z_2-z_1\bigr)\le |z_2-z_1|^2.
\]
However, $\Psi_0$ is not globally Lipschitz continuous.

In this article, we consider the stochastic Allen-Cahn equation, with additive space-time white noise, {\it i.e.} the Stochastic Partial Differential Equation
\begin{equation}\label{eq:AC}
dX(t)=AX(t)dt+\Psi_0(X(t))dt+dW(t), X(0)=x_0,
\end{equation}
with an initial condition $x_0\in E$.

We quote the following well-posedness result, see for instance~\cite[Chapter~6]{Cerrai:01}.
\begin{propo}\label{propo:AC}
Let $T\in(0,\infty)$. There exists a unique global mild solution $\bigl(X(t)\bigr)_{0\le t\le T}$ of~\eqref{eq:AC}, with values in $E$. Moreover, for every $p\in\N$, there exists $C_p(T)\in(0,\infty)$ such that
\[
\E\bigl[\underset{0\le t\le T}\sup|X(t)|_{E}^{2p}\bigr]\le C_p(T)\bigl(1+|x_0|_{E}^{2p}\bigr).
\]
\end{propo}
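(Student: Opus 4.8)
The plan is to reduce the SPDE with multiplicative... no, additive noise to a deterministic PDE with random coefficients, by subtracting the stochastic convolution. I would set $Y(t) = X(t) - W^A(t)$, where $W^A$ is the stochastic convolution from the previous subsection. If $X$ solves~\eqref{eq:AC}, then formally $Y$ solves the random PDE
\[
\frac{dY}{dt} = AY + \Psi_0\bigl(Y + W^A(t)\bigr), \quad Y(0) = x_0,
\]
which has no noise term and can be analyzed pathwise, treating $\omega \in \Omega$ as a fixed parameter. The point of this transformation is that $W^A$ has continuous paths in $E$ by the remarks preceding the proposition, so for each fixed $\omega$ the forcing $t \mapsto W^A(t)(\omega)$ is a continuous $E$-valued function, and I can apply deterministic parabolic theory.

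The key steps, in order, would be as follows. First, I would establish local existence and uniqueness of a mild solution $Y$ in $E$ on a random time interval $[0,\tau)$ by a fixed-point argument: since $\Psi_0$ is locally Lipschitz (being a polynomial) and the semigroup $e^{tA}$ is strongly continuous on $E$, a standard contraction-mapping argument on a small time interval gives a unique local mild solution, with blow-up alternative ($\tau = T$ or $|Y(t)|_E \to \infty$ as $t \uparrow \tau$). Second, and this is the heart of the matter, I would derive an \emph{a priori} bound on $|Y(t)|_E$ to rule out finite-time blow-up and to obtain the claimed moment estimate. Here I would exploit the dissipativity coming from the double-well structure: the nonlinearity $\Psi_0(z) = z - z^3$ satisfies $\Psi_0(z)\,\mathrm{sign}(z) \le z - z^3$, so for large $|z|$ the cubic term forces the solution back, giving an estimate of the form $|Y(t)|_E \le C\bigl(1 + \sup_{0\le s \le t} |W^A(s)|_E\bigr)$ with a constant independent of the initial data in the relevant regime. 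The cleanest route is a comparison or maximum-principle argument for the scalar inequality satisfied by $m(t) = |Y(t)|_E$, using the one-sided Lipschitz bound together with the contractivity of the semigroup in $E$. Third, I would convert the pathwise bound into the moment estimate: writing $|X(t)|_E \le |Y(t)|_E + |W^A(t)|_E$, and using~\eqref{eq:moment_stochastic-convolution} to control all moments of $\sup_{0\le t\le T}|W^A(t)|_E$, I would conclude
\[
\E\bigl[\sup_{0\le t\le T}|X(t)|_E^{2p}\bigr] \le C_p(T)\bigl(1 + |x_0|_E^{2p}\bigr).
\]

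The main obstacle is the global \emph{a priori} bound in $E$, i.e.\ showing $\tau = T$ almost surely. The difficulty is that the superlinear dissipation must be leveraged \emph{in the sup norm}, not merely in $H$: an energy estimate in $H$ is comparatively easy (the one-sided Lipschitz condition gives it directly), but upgrading to a bound on $|Y(t)|_E$ requires either a careful use of the analytic-semigroup smoothing (bootstrapping from $L^2$ bounds through fractional powers of $A$ to the embedding into $E$) or a genuine maximum-principle argument for the shifted equation. I expect the cleanest presentation to invoke the comparison principle: the function $v(t)$ solving the scalar ODE $\dot v = v - v^3$ with large data dominates the sup norm of $Y$ up to the continuous forcing $W^A$, and since this scalar flow does not blow up (indeed it is bounded), no blow-up can occur. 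Since the statement is quoted from~\cite[Chapter~6]{Cerrai:01}, I would in practice cite that reference for the global well-posedness and only sketch the dissipativity estimate that yields the moment bound with the explicit dependence on $|x_0|_E$.
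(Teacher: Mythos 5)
The paper does not prove this proposition itself: it quotes it from \cite[Chapter~6]{Cerrai:01}, and your outline — subtracting the stochastic convolution $W^A$, solving the resulting random PDE pathwise by a local fixed-point argument, using the dissipativity of the cubic term (via a comparison/maximum-principle argument in $E$) to exclude blow-up, and then converting to moment bounds via~\eqref{eq:moment_stochastic-convolution} — is precisely the standard argument given in that reference. Your proposal is correct and takes essentially the same approach as the paper's (cited) proof.
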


\section{Numerical schemes}\label{sec:num-sche}

Since the coefficient $\Psi_0$ is not globally Lipschitz continuous, it is well-known that explicit discretization schemes are not appropriate -- unless combined with a taming strategy, as in~\cite{BeckerGessJentzenKloeden:17, GyongySabanisSiska:16} for instance. Fully implicit schemes are expensive, and split-step schemes such as defined in~\cite{KovacsLarssonLindgren:15_1, KovacsLarssonLindgren:15_2}, with an implicit discretization for the contributions depending on $\Psi_0$, may be defined.

In the case of Allen-Cahn equations, our strategy, detailed below, consists in replacing these implicit steps with the exact solution of the flow associated with $\Psi_0$, in the spirit of~\cite{JeongLeeLeeShinKim:16, LeeLee:15}.

\subsection{Splitting schemes}

%We consider splitting methods, where the contribution of the nonlinear coefficient $\Psi_0$ is separated from the others. This strategy allows us to avoid the use of fully implicit schemes.

%In addition, we take advantage of the expression of $\Psi_{0}(z)=z-z^3$: the flow associated with the ODE
Introduce the auxiliary ordinary differential equation
\[
\dot{z}=\Psi_0(z),~z(0)=z_0\in \mathbb{R}.
\]
The flow of this equation is known: the unique solution $\bigl(z(t)\bigr)_{t\ge 0}$ is given by
\begin{equation}\label{eq:defPhi}
z(t)=\Phi_t(z_0)=\frac{z}{\sqrt{z_0^2+(1-z_0^2)e^{-2t}}}~,\quad t\ge 0.
\end{equation}
%In many works, an implicit scheme of the type $z_{n+1}=z_{n}+\Delta t\Psi_0(z_{n+1})$ is used in the split-step method: instead, we use the exact solution $z_{n+1}=\Phi_{\Delta t}(z_n)$.

The splitting schemes we consider may be written in the following abstract form: let $\Delta t>0$ denote the time-step size, then
\begin{equation}\label{eq:scheme_Gamma}
\begin{cases}
Y_{n}=\Phi_{\Delta t}(X_n),\\
X_{n+1}=\Gamma\bigl(Y_n,\Delta t,(W(t))_{n\Delta t\le t\le (n+1)\Delta t}\bigr).
\end{cases}
\end{equation}
To complete the definition of the numerical schemes, it remains to provide the definition of the mapping $\Gamma$, corresponding to the approximation of the stochastic convolution. In the analysis below, three examples are considered:
\[
\begin{cases}
\Gamma^{{\rm exact}}\bigl(y,\Delta t,(W(t))_{n\Delta t\le t\le (n+1)\Delta t}\bigr)=e^{\Delta t A}y+\int_{n\Delta t}^{(n+1)\Delta t}e^{((n+1)\Delta t-t)A}dW(t),\\
\Gamma^{{\rm expo}}\bigl(y,\Delta t,(W(t))_{n\Delta t\le t\le (n+1)\Delta t}\bigr)=S_{\Delta t}^{{\rm expo}}y+S_{\Delta t}^{{\rm expo}}\Delta W_n,\\
\Gamma^{{\rm imp}}\bigl(y,\Delta t,(W(t))_{n\Delta t\le t\le (n+1)\Delta t}\bigr)=S_{\Delta t}^{{\rm imp}}y+S_{\Delta t}^{{\rm imp}}\Delta W_n
\end{cases}
\]
where $\Delta W_n=W\bigl((n+1)\Delta t\bigr)-W\bigl(n\Delta t\bigr)$ are Wiener increments, and with linear operators
\[
S_{\Delta t}^{{\rm expo}}=e^{\Delta tA} \quad,\quad S_{\Delta t}^{{\rm imp}}=\bigl(I-\Delta t A\bigr)^{-1}.
\]

Numerical experiments, see Section~\ref{sec:num-expe}; will also be performed for other schemes, based on different splitting strategies.

When using the splitting scheme with $\Gamma=\Gamma^{{\rm exact}}$, both sub-steps are solved exactly. On the contrary, when using the other examples, there is an error due to the discretization of the stochastic convolution.

We use the notation $X_n^{{\rm exact}}$, $X_n^{{\rm expo}}$ and $X_n^{{\rm imp}}$, when choosing $\Gamma=\Gamma^{{\rm exact}}$, $\Gamma^{{\rm expo}}$ and $\Gamma=\Gamma^{{\rm imp}}$ respectively. To simplify, we do not mention the dependence with respect to $\Delta t$.

\subsection{Auxiliary SPDE}

Define auxiliary functions $\Psi_t$, for $t>0$, as follows: for all $z\in\mathbb{R}$,
\begin{equation}\label{eq:defPsi}
\Psi_{t}(z)=\frac{\Phi_t(z)-z}{t}.
\end{equation}
%Note that $\Psi_0(z)=\underset{t\to 0}\lim\Psi_t(z)$.

An important tool in the analysis is the auxiliary equation
\begin{equation}\label{eq:aux}
dX^{(\Delta t)}(t)=AX^{(\Delta t)}(t)dt+\Psi_{\Delta t}\bigl(X^{(\Delta t)}(t)\bigr)dt+dW(t)~,\quad X^{(\Delta t)}(0)=x_0,
\end{equation}
with nonlinear coefficient $\Psi_0$ in~\eqref{eq:AC} replaced with $\Psi_{\Delta t}$.

Observe that the numerical schemes defined by~\eqref{eq:scheme_Gamma}, based on the splitting method, can be interpreted as standard numerical schemes for the auxiliary equation~\eqref{eq:aux}:
\[
\begin{cases}
X_{n+1}^{{\rm exact}}=e^{\Delta t A}X_n^{{\rm exact}}+\Delta te^{\Delta t A}\Psi_{\Delta t}(X_n^{{\rm exact}})+\int_{n\Delta t}^{(n+1)\Delta t}e^{((n+1)\Delta t-t)A}dW(t),\\
X_{n+1}^{{\rm expo}}=S_{\Delta t}^{{\rm expo}}X_n^{{\rm expo}}+\Delta tS_{\Delta t}^{{\rm expo}}\Psi_{\Delta t}(X_n^{{\rm expo}})+S_{\Delta t}^{{\rm expo}}\Delta W_n,\\
X_{n+1}^{{\rm imp}}=S_{\Delta t}^{{\rm imp}}X_n^{{\rm imp}}+\Delta tS_{\Delta t}^{{\rm imp}}\Psi_{\Delta t}(X_n^{{\rm imp}})+S_{\Delta t}^{{\rm imp}}\Delta W_n.
\end{cases}
\]
The schemes $X^{{\rm exact}}$ and $X^{{\rm expo}}$ correspond to versions of the exponential Euler scheme, applied to the auxiliary equation~\eqref{eq:aux}. The scheme $X^{{\rm imp}}$ is the standard linear implicit Euler scheme, applied to the auxiliary equation~\eqref{eq:aux}.

The three schemes are well-defined, for any value of $\Delta t>0$. Indeed, the mapping $\Psi_{\Delta t}$ is globally Lipschitz continuous.

\begin{rem}
One may also introduce the following scheme:
\[
X_{n+1}^{{\rm acc}}=e^{\Delta t A}X_n^{{\rm acc}}+(-A)^{-1}\bigl(I-e^{\Delta tA}\bigr)\Psi_{\Delta t}(X_n^{{\rm acc}})+\int_{n\Delta t}^{(n+1)\Delta t}e^{((n+1)\Delta t-t)A}dW(t).
\]
This scheme corresponds to the application of the accelerated exponential Euler scheme to the auxiliary equation. However, this scheme is not based on a splitting method.
\end{rem}

\subsection{Results concerning the auxiliary coefficients $\Phi_{\Delta t}$ and $\Psi_{\Delta t}$}\label{sec:aux_flow}

In this section, we state several results concerning the real valued mappings $\Phi_{\Delta t}$ and $\Psi_{\Delta t}$ defined by~\eqref{eq:defPhi} and~\eqref{eq:defPsi}, with $t=\Delta t$. Proofs are postponed to the Appendix~\ref{appendix}.

Note that the estimates below are uniform for $\Delta t\in[0,\Delta t_0]$, for any arbitrary $\Delta t_0>0$ -- and without loss of generality assume $\Delta t_0<1$. Moreover, the estimates are consistent when $\Delta t=0$, with $\Phi_0(z)=z$ and $\Psi_0(z)=z-z^3$.

The first result yields global Lipschitz continuity of $\Phi_{\Delta t}$.
\begin{lemma}\label{lem:Lip_Phi}
For every $\Delta t_0\in(0,1)$, for all $\Delta t\in[0,\Delta t_0]$, the mapping $\Phi_{\Delta t}$ is globally Lipschitz continuous, and the Lipschitz constant is bounded from above uniformly for $\Delta t\in[0,\Delta t_0]$. More precisely, for all $z_1,z_2\in\R$,
\[
\big|\Phi_{\Delta t}(z_2)-\Phi_{\Delta t}(z_1)\big|\le e^{\Delta t}|z_2-z_1|.
\]
\end{lemma}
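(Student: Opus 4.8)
The plan is to exploit the fact that $\Phi_{\Delta t}$ is, by construction, the time-$\Delta t$ flow map of the scalar ODE $\dot z=\Psi_0(z)$, and to obtain the Lipschitz constant by bounding its spatial derivative uniformly in the starting point. First I would record that $z_0\mapsto \Phi_t(z_0)$ is everywhere defined and smooth: this follows either from standard smooth dependence of ODE flows on initial conditions, or directly from the explicit formula~\eqref{eq:defPhi}, after noting that $z^2+(1-z^2)e^{-2t}=e^{-2t}+z^2(1-e^{-2t})\ge e^{-2t}>0$ for every $z\in\R$ when $t>0$, so no singularity occurs. I then set $w(t,z_0)=\partial_{z_0}\Phi_t(z_0)$.

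The key step is to control $w$. Differentiating the flow identity $\frac{d}{dt}\Phi_t(z_0)=\Psi_0(\Phi_t(z_0))$ with respect to $z_0$ yields the linear variational equation
\[
\frac{d}{dt}w(t,z_0)=\Psi_0'\bigl(\Phi_t(z_0)\bigr)\,w(t,z_0),\qquad w(0,z_0)=1,
\]
whose solution is $w(t,z_0)=\exp\bigl(\int_0^t \Psi_0'(\Phi_s(z_0))\,ds\bigr)$. Since $\Psi_0'(y)=1-3y^2\le 1$ for every $y\in\R$ --- which is exactly the infinitesimal form of the one-sided Lipschitz bound already recorded for $\Psi_0$ --- the exponent is at most $t$, and therefore $0<w(t,z_0)\le e^{t}$ uniformly in $z_0$. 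The lemma then follows by the fundamental theorem of calculus: for $z_1,z_2\in\R$,
\[
\big|\Phi_{\Delta t}(z_2)-\Phi_{\Delta t}(z_1)\big|=\Big|\int_{z_1}^{z_2} w(\Delta t,z)\,dz\Big|\le e^{\Delta t}\,|z_2-z_1|,
\]
which is the claimed estimate. The case $\Delta t=0$ reduces to the identity $\Phi_0=\mathrm{id}$, and uniformity over $\Delta t\in[0,\Delta t_0]$ is immediate from $e^{\Delta t}\le e^{\Delta t_0}$.

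I do not anticipate a genuine obstacle here; the content is essentially the single inequality $\Psi_0'\le 1$. The only points deserving a line of care are the well-definedness/smoothness of $\Phi_{\Delta t}$ on all of $\R$ and the positivity of $w$. As a fully elementary alternative bypassing the flow interpretation, I would differentiate~\eqref{eq:defPhi} directly; writing $g(z)=e^{-2\Delta t}+z^2(1-e^{-2\Delta t})$, a short simplification gives $\Phi_{\Delta t}'(z)=e^{-2\Delta t}\,g(z)^{-3/2}$, which is strictly positive and decreasing in $|z|$ (because $1-e^{-2\Delta t}\ge 0$), hence maximized at $z=0$ with value $e^{\Delta t}$. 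The same application of the fundamental theorem of calculus then concludes the argument.
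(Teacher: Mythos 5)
Your proof is correct. Your primary argument differs from the paper's: the paper simply differentiates the explicit formula~\eqref{eq:defPhi} and observes that
\[
\frac{d}{dz}\Phi_{\Delta t}(z)=\frac{e^{-2\Delta t}}{\bigl(e^{-2\Delta t}+(1-e^{-2\Delta t})z^2\bigr)^{3/2}}\in[0,e^{\Delta t}],
\]
which is exactly the ``fully elementary alternative'' you sketch at the end (and your computation of the maximizer at $z=0$ with value $e^{\Delta t}$ is right). Your main route instead goes through the variational equation $\dot w=\Psi_0'(\Phi_t(z_0))\,w$, $w(0)=1$, and the single inequality $\Psi_0'\le 1$; the only points needing care there --- global forward-in-time existence of the flow and positivity of $w$ --- are ones you address, the former via the lower bound $z_0^2+(1-z_0^2)e^{-2t}\ge e^{-2t}>0$. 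The trade-off is clear: the paper's computation is shorter because the flow is known in closed form, whereas your variational argument is more structural and more general --- it shows that for any drift satisfying a one-sided Lipschitz condition with constant $L$ the time-$t$ flow map is Lipschitz with constant $e^{Lt}$, with no need for an explicit solution, and it explains why the constant $e^{\Delta t}$ appearing here is precisely the exponential of the one-sided Lipschitz constant of $\Psi_0$. Either route is acceptable.
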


The second result yields a one-sided Lipschitz condition for $\Psi_{\Delta t}$.
\begin{lemma}\label{lem:one-sided}
For every $\Delta t_0\in(0,1)$, for all $\Delta t\in[0,\Delta t_0]$, the mapping $\Psi_{\Delta t}$ satisfies a one-sided Lipschitz condition, uniformly for $\Delta t\in[0,\Delta t_0]$.  More precisely, for all $z_1,z_2\in\R$,
\[
\bigl(\Psi_{\Delta t}(z_2)-\Psi_{\Delta t}(z_1)\bigr)\bigl(z_2-z_1\bigr)\le e^{\Delta t}(z_2-z_1)^2.
\]
\end{lemma}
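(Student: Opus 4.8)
The plan is to deduce the one-sided Lipschitz bound for $\Psi_{\Delta t}$ directly from the global Lipschitz bound for $\Phi_{\Delta t}$ established in Lemma~\ref{lem:Lip_Phi}, so that essentially all the analytic work has already been done. Fix $\Delta t\in(0,\Delta t_0]$ and $z_1,z_2\in\R$ (the case $\Delta t=0$ is treated at the end). Using the definition~\eqref{eq:defPsi}, namely $\Psi_{\Delta t}(z)=\bigl(\Phi_{\Delta t}(z)-z\bigr)/\Delta t$, I would first write
\[
\bigl(\Psi_{\Delta t}(z_2)-\Psi_{\Delta t}(z_1)\bigr)\bigl(z_2-z_1\bigr)=\frac{1}{\Delta t}\Bigl[\bigl(\Phi_{\Delta t}(z_2)-\Phi_{\Delta t}(z_1)\bigr)\bigl(z_2-z_1\bigr)-(z_2-z_1)^2\Bigr].
\]
This isolates the contribution of the flow map from the identity, and reduces the problem to an upper bound on the single product $\bigl(\Phi_{\Delta t}(z_2)-\Phi_{\Delta t}(z_1)\bigr)\bigl(z_2-z_1\bigr)$.

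For that product, I would use the elementary inequality $ab\le|a|\,|b|$ together with Lemma~\ref{lem:Lip_Phi}:
\[
\bigl(\Phi_{\Delta t}(z_2)-\Phi_{\Delta t}(z_1)\bigr)\bigl(z_2-z_1\bigr)\le\big|\Phi_{\Delta t}(z_2)-\Phi_{\Delta t}(z_1)\big|\,|z_2-z_1|\le e^{\Delta t}(z_2-z_1)^2.
\]
Substituting this into the previous identity gives
\[
\bigl(\Psi_{\Delta t}(z_2)-\Psi_{\Delta t}(z_1)\bigr)\bigl(z_2-z_1\bigr)\le\frac{e^{\Delta t}-1}{\Delta t}\,(z_2-z_1)^2.
\]
It then remains to check the scalar inequality $\frac{e^{\Delta t}-1}{\Delta t}\le e^{\Delta t}$; this is equivalent to $1-e^{-\Delta t}\le\Delta t$, which follows from the standard bound $e^{-\Delta t}\ge 1-\Delta t$. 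This yields the claimed constant $e^{\Delta t}$, uniformly in $\Delta t\in(0,\Delta t_0]$. The degenerate case $\Delta t=0$ is exactly the one-sided Lipschitz property of $\Psi_0$ stated in Section~\ref{sec:setting} (with constant $1=e^0$), which is also the limit of $\frac{e^{\Delta t}-1}{\Delta t}$ as $\Delta t\to 0$, so the estimate is consistent there.

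There is no genuine obstacle in this argument: once Lemma~\ref{lem:Lip_Phi} is available the only nontrivial step is the elementary inequality $\frac{e^{\Delta t}-1}{\Delta t}\le e^{\Delta t}$, and the argument in fact produces the slightly sharper constant $\frac{e^{\Delta t}-1}{\Delta t}$, which tends to $1$ as $\Delta t\to 0$. An alternative route avoiding Lemma~\ref{lem:Lip_Phi} would be to write $\Psi_{\Delta t}'(z)=\bigl(\Phi_{\Delta t}'(z)-1\bigr)/\Delta t$ and use the representation $\bigl(\Psi_{\Delta t}(z_2)-\Psi_{\Delta t}(z_1)\bigr)(z_2-z_1)=(z_2-z_1)^2\int_0^1\Psi_{\Delta t}'\bigl(z_1+s(z_2-z_1)\bigr)\,ds$, reducing everything to the pointwise bound $\Phi_{\Delta t}'(z)\le e^{\Delta t}$; the latter follows from the variational equation $\frac{d}{dt}\Phi_t'(z)=\Psi_0'(\Phi_t(z))\Phi_t'(z)$ and the inequality $\Psi_0'\le 1$, but this is precisely the content of Lemma~\ref{lem:Lip_Phi}, so the first route is preferable.
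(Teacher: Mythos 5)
Your proof is correct, and it takes a genuinely different route from the paper's. The paper works at the level of derivatives: it computes $\Psi_{\Delta t}'(z)$ explicitly, rewrites it as the difference quotient $\frac{1}{\Delta t}\bigl(f_z(\Delta t)-f_z(0)\bigr)$ of the auxiliary function $f_z(t)=e^{-2t}\bigl[z^2+(1-z^2)e^{-2t}\bigr]^{-3/2}$, shows $f_z'\le f_z$ so that Gronwall's lemma gives $f_z(t)\le e^t$ and hence $f_z'(t)\le e^t$, and concludes that $\Psi_{\Delta t}'(z)\le e^{\Delta t}$ pointwise. You instead avoid any new computation via the algebraic identity $\bigl(\Psi_{\Delta t}(z_2)-\Psi_{\Delta t}(z_1)\bigr)(z_2-z_1)=\frac{1}{\Delta t}\bigl[\bigl(\Phi_{\Delta t}(z_2)-\Phi_{\Delta t}(z_1)\bigr)(z_2-z_1)-(z_2-z_1)^2\bigr]$, invoke Lemma~\ref{lem:Lip_Phi} (which precedes this lemma, so there is no circularity), and finish with the elementary inequality $\frac{e^{\Delta t}-1}{\Delta t}\le e^{\Delta t}$; the case $\Delta t=0$ is correctly handled separately. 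Your route is shorter, reuses existing work, and yields the slightly sharper constant $\frac{e^{\Delta t}-1}{\Delta t}$. The trade-off is that by estimating the product through $ab\le|a|\,|b|$ you discard the one-sided structure of $\Phi_{\Delta t}$ and recover it only thanks to the subtracted term $-(z_2-z_1)^2/\Delta t$, whereas the paper's derivative computation also produces, essentially for free, the bounds on $|f_z'|$ that are reused verbatim in the proof of the local Lipschitz estimate of Lemma~\ref{lem:Lip_Psi}; your shortcut does not supply those.
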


In addition to the one-sided Lipschitz condition from Lemma~\ref{lem:one-sided} above, $\Psi_{\Delta t}$ is locally Lipschitz continuous.
\begin{lemma}\label{lem:Lip_Psi}
For every $\Delta t_0\in(0,1)$, there exists $C(\Delta t_0)\in(0,\infty)$, such that for all $\Delta t\in[0,\Delta t_0]$ and all $z_1,z_2\in\R$,
\[
\big|\Psi_{\Delta t}(z_2)-\Psi_{\Delta t}(z_1)\big|\le C(\Delta t_0)|z_2-z_1|\bigl(1+|z_1|^3+|z_2|^3\bigr).
\]
In addition, for all $z\in\R$,
\[
|\Psi_{\Delta t}(z)|\le C(\Delta t_0)(1+|z|^4).
\]
\end{lemma}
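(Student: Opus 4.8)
The plan is to exploit the fact that $\Phi_s$ is the flow of the scalar ODE $\dot z=\Psi_0(z)$, which yields the integral representation
\[
\Psi_{\Delta t}(z)=\frac{\Phi_{\Delta t}(z)-z}{\Delta t}=\frac{1}{\Delta t}\int_0^{\Delta t}\frac{\partial}{\partial s}\Phi_s(z)\,ds=\frac{1}{\Delta t}\int_0^{\Delta t}\Psi_0\bigl(\Phi_s(z)\bigr)\,ds,
\]
valid for every $\Delta t>0$, since $\Phi_0(z)=z$ and $\partial_s\Phi_s(z)=\Psi_0(\Phi_s(z))$ by definition of the flow. Thus $\Psi_{\Delta t}(z)$ is the time-average over $[0,\Delta t]$ of $\Psi_0$ evaluated along the flow, and both estimates will follow from controlling $\Psi_0\circ\Phi_s$ uniformly in $s\in[0,\Delta t]$.

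The second ingredient is the a priori bound $|\Phi_s(z)|\le\max(|z|,1)$ for all $s\ge 0$ and $z\in\R$. This is read off directly from the explicit formula: writing $g(s,z)=z^2+(1-z^2)e^{-2s}$, one checks that $1\le g(s,z)\le z^2$ when $|z|\ge 1$ and $z^2\le g(s,z)\le 1$ when $|z|\le 1$ (using $e^{-2s}\in(0,1]$ together with the sign of $1-z^2$), whence $|\Phi_s(z)|=|z|/\sqrt{g(s,z)}$ lies between $\min(|z|,1)$ and $\max(|z|,1)$. Equivalently, this reflects the monotone contraction of the flow towards the stable equilibria $\pm 1$. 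In particular $g(s,z)\ge e^{-2s}\ge e^{-2\Delta t_0}$ stays bounded away from $0$, so there is no degeneracy on $[0,\Delta t_0]$.

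For the growth bound I would combine these two facts with $|\Psi_0(w)|\le|w|+|w|^3$: for every $s\in[0,\Delta t]$,
\[
\bigl|\Psi_0(\Phi_s(z))\bigr|\le\max(|z|,1)+\max(|z|,1)^3\le C(1+|z|^3),
\]
and averaging over $s$ preserves this bound, giving $|\Psi_{\Delta t}(z)|\le C(1+|z|^3)\le C(1+|z|^4)$. For the local Lipschitz estimate I would write $\Psi_0(w_2)-\Psi_0(w_1)=\Psi_0'(\zeta)(w_2-w_1)$ for some $\zeta$ between $w_1,w_2$, with $|\Psi_0'(\zeta)|=|1-3\zeta^2|\le 1+3\max(|w_1|,|w_2|)^2$; applying this with $w_i=\Phi_s(z_i)$, bounding $\max(|\Phi_s(z_1)|,|\Phi_s(z_2)|)^2\le 1+|z_1|^2+|z_2|^2$ via the flow bound, and using Lemma~\ref{lem:Lip_Phi} in the form $|\Phi_s(z_2)-\Phi_s(z_1)|\le e^s|z_2-z_1|\le e^{\Delta t_0}|z_2-z_1|$, yields
\[
\bigl|\Psi_0(\Phi_s(z_2))-\Psi_0(\Phi_s(z_1))\bigr|\le C(1+|z_1|^2+|z_2|^2)\,|z_2-z_1|
\]
uniformly in $s\in[0,\Delta t]$; averaging and absorbing $|z|^2\le 1+|z|^3$ gives the claimed cubic-growth Lipschitz bound.

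The computations are all elementary, so there is no serious obstacle; the only point requiring care is the uniform flow bound $|\Phi_s(z)|\le\max(|z|,1)$ together with the fact that $g(s,z)$ stays bounded away from zero on $s\in[0,\Delta t_0]$, which is exactly what keeps every constant uniform in $\Delta t\le\Delta t_0$. Note that this approach in fact delivers bounds slightly stronger than stated (cubic rather than quartic growth, and a quadratic Lipschitz modulus), which is harmless.
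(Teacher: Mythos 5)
Your proof is correct, but it follows a genuinely different route from the paper's. The paper works directly with the explicit formula: it computes $\frac{d\Psi_{\Delta t}(z)}{dz}=\frac{1}{\Delta t}\bigl(f_z(\Delta t)-f_z(0)\bigr)$ with $f_z(t)=e^{-2t}\bigl[z^2+(1-z^2)e^{-2t}\bigr]^{-3/2}$, bounds $|f_z'(t)|\le e^t+3|z|^2e^{3t}$ by a mean-value argument in the time variable, and concludes $\big|\frac{d\Psi_{\Delta t}(z)}{dz}\big|\le 3e^{3\Delta t}(1+|z|^2)$; the growth bound then follows from $\Psi_{\Delta t}(0)=0$. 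You instead use the representation $\Psi_{\Delta t}(z)=\frac{1}{\Delta t}\int_0^{\Delta t}\Psi_0(\Phi_s(z))\,ds$ and transfer the polynomial Lipschitz and growth properties of $\Psi_0$ through the flow, via the invariant-region bound $|\Phi_s(z)|\le\max(|z|,1)$ and the uniform Lipschitz bound of Lemma~\ref{lem:Lip_Phi}. Your flow bound is verified correctly (the sign analysis of $1-z^2$ against $e^{-2s}\in(0,1]$ is exactly right), and the handling of the degenerate case $\Delta t=0$ is immediate since $\Psi_0(z)=z-z^3$ there. What your approach buys is conceptual robustness: it uses only qualitative features of the flow (a bounded-growth invariant region and Lipschitz continuity) rather than the explicit form of $f_z$, so it would carry over to other splitting nonlinearities; it also reuses Lemma~\ref{lem:Lip_Phi} rather than redoing a Gronwall-type computation. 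What the paper's computation buys is uniformity of method across the appendix, since the same object $\frac{1}{\Delta t}(f_z(\Delta t)-f_z(0))$ is already the key quantity in the proof of Lemma~\ref{lem:one-sided}. Both arguments in fact yield the slightly stronger quadratic Lipschitz modulus and cubic growth, which is harmless since the stated bounds are weaker.
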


Finally, the following result makes precise the speed of convergence of $\Psi_{\Delta t}$ to $\Psi_0$, when $\Delta t\to 0$.
\begin{lemma}\label{lem:errorPsi}
For every $\Delta t_0\in(0,1)$, there exists $C(\Delta t_0)\in(0,\infty)$ such that, for all $\Delta t\in[0,\Delta t_0]$ and $z\in \R$,
\[
|\Psi_{\Delta t}(z)-\Psi_0(z)|\le C(\Delta t_0)\Delta t(1+|z|^{5}).
\]
\end{lemma}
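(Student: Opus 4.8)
The plan is to exploit the fact that $\Phi_t$ is the flow of the scalar ODE $\dot z=\Psi_0(z)$, which turns the difference $\Psi_{\Delta t}(z)-\Psi_0(z)$ into an integral remainder whose size is governed by the derivative of $s\mapsto\Psi_0(\Phi_s(z))$. Concretely, since $\partial_s\Phi_s(z)=\Psi_0(\Phi_s(z))$ and $\Phi_0(z)=z$, the fundamental theorem of calculus gives $\Phi_{\Delta t}(z)-z=\int_0^{\Delta t}\Psi_0(\Phi_s(z))\,ds$, so by~\eqref{eq:defPsi} we have $\Psi_{\Delta t}(z)=\frac{1}{\Delta t}\int_0^{\Delta t}\Psi_0(\Phi_s(z))\,ds$. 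Writing $\Psi_0(z)=\Psi_0(\Phi_0(z))$ as the average of the constant integrand over $[0,\Delta t]$, I would then obtain
\[
\Psi_{\Delta t}(z)-\Psi_0(z)=\frac{1}{\Delta t}\int_0^{\Delta t}\bigl(\Psi_0(\Phi_s(z))-\Psi_0(z)\bigr)\,ds.
\]

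Next I would control the inner difference by a second application of the fundamental theorem of calculus. Since $\frac{d}{dr}\Psi_0(\Phi_r(z))=\Psi_0'(\Phi_r(z))\,\Psi_0(\Phi_r(z))$, we have $\Psi_0(\Phi_s(z))-\Psi_0(z)=\int_0^s\Psi_0'(\Phi_r(z))\Psi_0(\Phi_r(z))\,dr$. The integrand is a polynomial in $\Phi_r(z)$: with $\Psi_0(w)=w-w^3$ and $\Psi_0'(w)=1-3w^2$ one computes $\Psi_0'(w)\Psi_0(w)=w-4w^3+3w^5$, a polynomial of degree five, which is exactly where the exponent $5$ and the growth $1+|z|^5$ will come from.

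The key step is therefore the a priori bound $|\Phi_r(z)|\le\max(1,|z|)\le 1+|z|$, valid for all $r\ge 0$. This follows either from the phase-line analysis of $\dot z=z-z^3$ (the stable equilibria are $\pm 1$ and the flow is monotone towards them, so $|\Phi_r(z)|$ never exceeds $\max(1,|z|)$), or directly from~\eqref{eq:defPhi} by inspecting the denominator $z^2+(1-z^2)e^{-2r}$, which yields $\Phi_r(z)^2\le\max(1,z^2)$. Granted this, $|\Psi_0'(\Phi_r(z))\Psi_0(\Phi_r(z))|\le C(1+|z|^5)$ uniformly in $r$, whence $|\Psi_0(\Phi_s(z))-\Psi_0(z)|\le Cs(1+|z|^5)$. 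Substituting into the first display gives
\[
|\Psi_{\Delta t}(z)-\Psi_0(z)|\le\frac{C(1+|z|^5)}{\Delta t}\int_0^{\Delta t}s\,ds=\frac{C}{2}\,\Delta t\,(1+|z|^5),
\]
which is the claimed estimate.

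I expect no serious obstacle: the only substantive ingredient is the uniform-in-$r$ bound on the flow, and it is elementary. The main point requiring care is that all constants must be independent of $\Delta t\in[0,\Delta t_0]$; this is automatic here, since $|\Phi_r(z)|\le 1+|z|$ holds for \emph{every} $r\ge 0$ and the coefficients of the degree-five polynomial are fixed numbers, so no dependence on $\Delta t$ enters the estimate.
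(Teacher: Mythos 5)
Your proof is correct, but it follows a genuinely different route from the paper's. The paper works directly with the explicit formula: it writes $\Psi_{\Delta t}(z)-\Psi_0(z)=\frac{z}{\Delta t}\bigl(g_z(\Delta t)-g_z(0)-\Delta t\,g_z'(0)\bigr)$ with $g_z(t)=\bigl(z^2+(1-z^2)e^{-2t}\bigr)^{-1/2}$, bounds $|g_z''(t)|\le C(\Delta t_0)(1+|z|^4)$ using the lower bound $z^2+(1-z^2)e^{-2t}\ge e^{-2\Delta t_0}$, and concludes by Taylor's formula (the extra factor $|z|$ in front producing the exponent $5$). You instead exploit the flow structure: $\Psi_{\Delta t}(z)=\frac{1}{\Delta t}\int_0^{\Delta t}\Psi_0(\Phi_s(z))\,ds$, a second application of the fundamental theorem of calculus with $\frac{d}{dr}\Psi_0(\Phi_r(z))=\Psi_0'(\Phi_r(z))\Psi_0(\Phi_r(z))=\Phi_r(z)-4\Phi_r(z)^3+3\Phi_r(z)^5$, and the invariance bound $|\Phi_r(z)|\le\max(1,|z|)$, which is easily verified from~\eqref{eq:defPhi}. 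Both arguments are complete and elementary. Yours is more structural: it uses only the ODE interpretation of $\Phi_t$, the polynomial growth of $\Psi_0'\Psi_0$, and the fact that the flow does not increase $\max(1,|z|)$, so it would transfer to other dissipative polynomial drifts without recomputing derivatives of an explicit formula; it also yields a constant independent of $\Delta t_0$, valid for all $\Delta t\ge 0$. The paper's computation is more self-contained in that it never invokes the flow property, only the closed form, and its intermediate bounds on $g_z$, $g_z'$, $g_z''$ are reused in the proofs of Lemmas~\ref{lem:one-sided} and~\ref{lem:Lip_Psi}. One cosmetic remark: your argument divides by $\Delta t$, so the endpoint $\Delta t=0$ of the stated range should be handled by the trivial observation that both sides vanish there.
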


\subsection{Moment bounds}\label{sec:moments}

\subsubsection{Moment bounds for solutions of the auxiliary SPDE~\eqref{eq:aux}}

Using the one-sided Lipschitz condition for $\Psi_{\Delta t}$, from Lemma~\ref{lem:one-sided}, the same arguments used to get Proposition~\ref{propo:AC} yield the following moment bounds for the solution $X^{(\Delta t)}$ of the auxiliary equation, uniformly in $\Delta t\in(0,\Delta t_0]$, for arbitrary $\Delta t_0\in(0,1)$.

\begin{propo}\label{propo:aux}
Let $T\in(0,\infty)$. There exists a unique global mild solution $\bigl(X^{(\Delta t)}(t)\bigr)_{0\le t\le T}$ of~\eqref{eq:aux}, with values in $E$. Moreover, for every $p\in\N$, and every $\Delta t_0\in(0,1)$, there exists $C_p(T,\Delta t_0)\in(0,\infty)$ such that
\[
\underset{\Delta t\in(0,\Delta t_0]}\sup\E\bigl[\underset{0\le t\le T}\sup|X^{(\Delta t_0)}(t)|_{E}^{2p}\bigr]\le C_p(T,\Delta t_0)\bigl(1+|x_0|_{E}^{2p}\bigr).
\]
\end{propo}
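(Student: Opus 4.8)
\textbf{Overview of the approach.} The statement is deliberately phrased to say that ``the same arguments used to get Proposition~\ref{propo:AC}'' apply, so my plan is to verify that the only two structural features of $\Psi_0$ that entered the well-posedness and moment bounds for the true Allen-Cahn equation~\eqref{eq:AC} are also available for $\Psi_{\Delta t}$, \emph{uniformly} in $\Delta t\in(0,\Delta t_0]$. Those two features are: the one-sided Lipschitz condition, and a polynomial (dissipative) growth control that forces a priori $E$-valued bounds. The first is exactly Lemma~\ref{lem:one-sided}, with a constant $e^{\Delta t}\le e^{\Delta t_0}$ that is bounded uniformly on $[0,\Delta t_0]$. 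The second follows from the growth bound $|\Psi_{\Delta t}(z)|\le C(\Delta t_0)(1+|z|^4)$ in Lemma~\ref{lem:Lip_Psi}, again with a constant independent of $\Delta t$. Since both structural inputs are uniform, the entire Cerrai-type argument reproduces with constants depending on $T$ and $\Delta t_0$ but not on the particular $\Delta t$, which is precisely the uniform supremum in the statement.

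\textbf{Key steps.} First I would fix $\Delta t_0\in(0,1)$ and work with the fixed nonlinearity $\Psi_{\Delta t}$, noting it is globally Lipschitz continuous (as already observed in the text), so that standard fixed-point theory in a suitable space of $E$-valued predictable processes gives existence and uniqueness of a global mild solution $X^{(\Delta t)}$ of~\eqref{eq:aux} for each fixed $\Delta t$. Second, I would introduce the shifted process $v^{(\Delta t)}(t)=X^{(\Delta t)}(t)-W^A(t)$, where $W^A$ is the stochastic convolution; this $v^{(\Delta t)}$ solves a random PDE driven by $A v + \Psi_{\Delta t}(v+W^A)$ with no noise, pathwise in $t$. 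Third, I would estimate $|v^{(\Delta t)}(t)|_E$ by testing against the equation and exploiting the dissipative structure: the one-sided Lipschitz condition of Lemma~\ref{lem:one-sided} (comparing $\Psi_{\Delta t}(v+W^A)$ with $\Psi_{\Delta t}(W^A)$) plus the quartic growth of $\Psi_{\Delta t}(W^A)$ controlled via Lemma~\ref{lem:Lip_Psi} yields a differential inequality of the form $\frac{d}{dt}|v^{(\Delta t)}|_E \le c_1(\Delta t_0)|v^{(\Delta t)}|_E + P\bigl(|W^A|_E\bigr)$ for a polynomial $P$, with constants independent of $\Delta t$. Integrating (Gronwall) and taking $2p$-th powers, the right-hand side involves only moments of $\sup_{0\le t\le T}|W^A(t)|_E$, which are finite and $\Delta t$-independent by~\eqref{eq:moment_stochastic-convolution}. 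Finally, reconstructing $X^{(\Delta t)}=v^{(\Delta t)}+W^A$ and applying the triangle inequality together with~\eqref{eq:moment_stochastic-convolution} gives the asserted bound, and since every constant along the way depends only on $T$ and $\Delta t_0$, the supremum over $\Delta t\in(0,\Delta t_0]$ is finite.

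\textbf{Main obstacle.} The delicate point is carrying out the $E$-valued (supremum-norm, rather than $H$-valued) energy estimate, since the comparison/maximum-principle argument in $\mathcal{C}([0,1])$ is more subtle than a Hilbert-space $\langle\cdot,\cdot\rangle$ test: one must use the structure of the analytic semigroup on $E$ and, typically, a regularization or a monotonicity argument at the point where $|v^{(\Delta t)}(t,\cdot)|$ attains its spatial maximum, so that the dissipativity of $A$ and of $\Psi_{\Delta t}$ can both be invoked. This is exactly the content of the Cerrai-type argument cited for Proposition~\ref{propo:AC}, so I would invoke it rather than redo it, and concentrate only on checking that the \emph{uniformity} in $\Delta t$ survives. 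The crux of that check is that the constants in Lemmas~\ref{lem:one-sided} and~\ref{lem:Lip_Psi} are bounded for $\Delta t\in[0,\Delta t_0]$, which the lemmas explicitly provide, so no new analysis beyond bookkeeping of constants is required.
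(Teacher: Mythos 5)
Your proposal is correct and follows essentially the same route as the paper, which itself only sketches the argument: global Lipschitz continuity of $\Psi_{\Delta t}$ for existence and uniqueness at fixed $\Delta t$, then the uniform-in-$\Delta t$ one-sided Lipschitz condition of Lemma~\ref{lem:one-sided} (together with the polynomial growth from Lemma~\ref{lem:Lip_Psi}) fed into the Cerrai-type shifted-process argument behind Proposition~\ref{propo:AC}. Your write-up is in fact more explicit than the paper's about where the uniformity enters, which is the only point requiring care.
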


Observe that, when $\Delta t>0$, the mapping $\Psi_{\Delta t}$ is globally Lipschitz continuous, the existence of moments is thus a standard result. The one-sided Lipschitz condition ensures that the estimate is uniform for $\Delta t\in(0,\Delta t_0]$.

\subsubsection{Moment bounds for solutions of the numerical schemes~\eqref{eq:scheme_Gamma}}

Let $\Delta t_0\in(0,1)$, and $\Delta t\in(0,\Delta t_0]$ denote a time-step size. Let $\bigl(X_n\bigr)_{n\in\N_0}$ be defined by the numerical scheme~\eqref{eq:scheme_Gamma}, with the mapping $\Gamma\in\left\{\Gamma^{{\rm exact}},\Gamma^{{\rm expo}},\Gamma^{{\rm imp}}\right\}$. For $T\in(0,\infty)$, let $N_{T,\Delta t}=\lfloor\frac{T}{\Delta t}\rfloor$.

\begin{propo}\label{propo:moment}
Let $T\in(0,\infty)$ and $\Delta t_0\in(0,1)$. For any $p\in\N$, there exists $C_p(t,\Delta t_0)\in(0,\infty)$ such that for all $\Delta t\in(0,\Delta t_0]$ and all $x_0\in E$,
\[
\E\bigl[\underset{0\le n\le N_{T,\Delta t}}\sup |X_n|_{E}^{2p}\bigr]\le C_p(T,\Delta t_0)(1+|x_0|_E^{2p}).
\]
\end{propo}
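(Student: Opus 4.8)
The plan is to reduce the nonlinear recursion to a \emph{pathwise} discrete Gronwall inequality, after subtracting off the stochastic convolution. I would first write the three schemes in the common form $X_{n+1}=S_{\Delta t}\Phi_{\Delta t}(X_n)+R_n$, where $S_{\Delta t}\in\{e^{\Delta tA},(I-\Delta tA)^{-1}\}$ and $R_n$ is the noise contribution (for $\Gamma^{\rm exact}$ one keeps $e^{\Delta tA}\Phi_{\Delta t}$ in front and $R_n=\int_{n\Delta t}^{(n+1)\Delta t}e^{((n+1)\Delta t-s)A}dW(s)$). The first thing to record is that each $S_{\Delta t}$ is a contraction on $E$: the Dirichlet heat semigroup $e^{\Delta tA}$, and the resolvent $(I-\Delta tA)^{-1}=\frac{1}{\Delta t}(\frac{1}{\Delta t}I-A)^{-1}$, both satisfy $|S_{\Delta t}x|_E\le|x|_E$ by the maximum principle.

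Next I would introduce the discrete stochastic convolution $Z_n$ defined by the linear recursion $Z_{n+1}=S_{\Delta t}Z_n+R_n$ with $Z_0=0$ (so that for $\Gamma^{\rm exact}$ one has $Z_n=W^A(n\Delta t)$). Setting $D_n=X_n-Z_n$, the noise cancels and one obtains $D_{n+1}=S_{\Delta t}\bigl(\Phi_{\Delta t}(D_n+Z_n)-Z_n\bigr)$ with $D_0=x_0$. The point is that this recursion is driven only by the bounded-moment process $Z_n$ and by the nonlinearity, with no fresh noise. The key pointwise estimate comes from splitting, for $a,b\in\R$,
\[
\Phi_{\Delta t}(a+b)-b=\bigl(\Phi_{\Delta t}(a+b)-\Phi_{\Delta t}(b)\bigr)+\bigl(\Phi_{\Delta t}(b)-b\bigr),
\]
bounding the first term by $e^{\Delta t}|a|$ via Lemma~\ref{lem:Lip_Phi}, and the second by $\Delta t|\Psi_{\Delta t}(b)|\le C(\Delta t_0)\Delta t(1+|b|^4)$ via the definition~\eqref{eq:defPsi} and the growth bound in Lemma~\ref{lem:Lip_Psi}.

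Applying this with $a=D_n(\xi)$, $b=Z_n(\xi)$, taking the supremum over $\xi\in[0,1]$, and using contractivity of $S_{\Delta t}$ on $E$, yields the pathwise recursion $|D_{n+1}|_E\le e^{\Delta t}|D_n|_E+C(\Delta t_0)\Delta t(1+|Z_n|_E^4)$. A discrete Gronwall argument then gives, for $n\Delta t\le T$,
\[
\sup_{0\le n\le N_{T,\Delta t}}|D_n|_E\le e^{T}|x_0|_E+C(\Delta t_0)e^{T}T\bigl(1+\sup_{0\le n\le N_{T,\Delta t}}|Z_n|_E^4\bigr),
\]
a bound that is entirely pathwise. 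Raising to the power $2p$, taking expectations, using $|X_n|_E\le|D_n|_E+|Z_n|_E$, and invoking moment bounds for $\sup_n|Z_n|_E$ finishes the proof.

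The \textbf{main obstacle} is this last ingredient: uniform-in-$\Delta t$ moment bounds $\E[\sup_{0\le n\le N_{T,\Delta t}}|Z_n|_E^q]\le C_q(T,\Delta t_0)$ for the discrete stochastic convolutions $Z_n^{\rm expo}$ and $Z_n^{\rm imp}$ in the sup-norm of $E$. For $\Gamma^{\rm exact}$ this is immediate from~\eqref{eq:moment_stochastic-convolution}, since $Z_n=W^A(n\Delta t)$, but in the discretized cases it requires controlling the Gaussian fields $Z_n$ uniformly in the step size. I would establish these by the factorization method together with a Sobolev/Kolmogorov continuity argument (or cite the corresponding standard results), the delicate point being to keep the constants independent of $\Delta t\in(0,\Delta t_0]$; the spectral representation of $S_{\Delta t}$ on the orthonormal basis $(e_n)$ makes the relevant space-time regularity estimates explicit.
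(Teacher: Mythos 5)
Your proposal is correct and follows essentially the same route as the paper: subtract the discrete stochastic convolution (your $Z_n$ is the paper's $\omega_n$), use contractivity of $S_{\Delta t}$ on $E$, the Lipschitz bound on $\Phi_{\Delta t}$ and the growth bound $\Phi_{\Delta t}(z)-z=\Delta t\Psi_{\Delta t}(z)=O(\Delta t(1+|z|^4))$ to get the same pathwise recursion, then discrete Gronwall plus uniform-in-$\Delta t$ moment bounds for $\sup_n|Z_n|_E$. The paper isolates that last ingredient as a separate lemma and likewise only sketches it via a continuous interpolation and the Kolmogorov regularity criterion, so your identification of it as the one nontrivial remaining step matches the paper exactly.
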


The proof uses the following result, in the situation with $\Psi_{0}$ replaced with $0$ in~\eqref{eq:AC}.
\begin{lemma}\label{lem:moment_omega}
Let $\bigl(\omega_n\bigr)_{n=0,\ldots,N_{T,\Delta t}}$ be defined by $\omega_0=0$, and
\[
\omega_{n+1}=\Gamma\bigl(\omega_n,\Delta t,(W(t))_{n\Delta t\le t\le (n+1)\Delta t}\bigr)
\]
Let $T\in(0,\infty)$ and $\Delta t_0\in(0,1)$. For any $p\in\N$, there exists $C_p(T,\Delta t_0)\in(0,\infty)$ such that for all $\Delta t\in(0,\Delta t_0]$,
\[
\E\bigl[\underset{0\le n\le N_{T,\Delta t}}\sup |\omega_n|_{E}^{2p}\bigr]\le C_p(T,\Delta t_0).
\]
\end{lemma}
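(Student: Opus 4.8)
The plan is to treat the three choices $\Gamma\in\{\Gamma^{\mathrm{exact}},\Gamma^{\mathrm{expo}},\Gamma^{\mathrm{imp}}\}$ separately, since in each case $\bigl(\omega_n\bigr)$ is an explicit linear functional of the noise: unrolling the recursion from $\omega_0=0$ gives a discrete stochastic convolution. For $\Gamma^{\mathrm{exact}}$ one checks immediately that $\omega_n=\int_0^{n\Delta t}e^{(n\Delta t-s)A}\,dW(s)=W^A(n\Delta t)$, so that $\sup_{0\le n\le N_{T,\Delta t}}|\omega_n|_E\le\sup_{0\le t\le T}|W^A(t)|_E$ and the bound follows at once from~\eqref{eq:moment_stochastic-convolution}, uniformly in $\Delta t$. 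The real work is in the two genuinely discrete cases, where I would adapt the factorization method underlying~\eqref{eq:moment_stochastic-convolution} itself, the whole point being to keep every constant uniform in $\Delta t\in(0,\Delta t_0]$.

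For $\Gamma^{\mathrm{expo}}$, unrolling yields $\omega_n=\sum_{k=0}^{n-1}e^{(n-k)\Delta t A}\Delta W_k$, which I would recognize as $Z(n\Delta t)$ for the continuous-time process $Z(t)=\int_0^t e^{(t-s_{\Delta t})A}\,dW(s)$, with $s_{\Delta t}=\Delta t\lfloor s/\Delta t\rfloor$. Writing $e^{(t-s_{\Delta t})A}=e^{(t-s)A}e^{(s-s_{\Delta t})A}$ exhibits $Z$ as a stochastic convolution whose integrand differs from the exact one only by the factor $e^{(s-s_{\Delta t})A}$, a contraction on $H$, on $E$ and on the fractional spaces, uniformly in $\Delta t$. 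I would then apply the factorization identity: for $\alpha\in(0,1/4)$, using stochastic Fubini and $\int_s^t(t-\sigma)^{\alpha-1}(\sigma-s)^{-\alpha}d\sigma=\pi/\sin(\pi\alpha)$,
\[
Z(t)=\frac{\sin(\pi\alpha)}{\pi}\int_0^t (t-\sigma)^{\alpha-1}e^{(t-\sigma)A}Y(\sigma)\,d\sigma,\qquad Y(\sigma)=\int_0^\sigma(\sigma-s)^{-\alpha}e^{(\sigma-s)A}e^{(s-s_{\Delta t})A}\,dW(s).
\]

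The sup-norm bound then rests on two estimates. First, working in $L^{2p}(0,1)$ rather than in $H$, I would invoke the Sobolev embedding $D((-A)^\rho)\hookrightarrow E$ valid for $\rho>1/(4p)$, together with the smoothing bound $\|(-A)^\rho e^{rA}\|\le Cr^{-\rho}$ and H\"older's inequality in time, to get
\[
\sup_{0\le t\le T}\|Z(t)\|_E^{2p}\le C(T,p,\alpha,\rho)\int_0^T\|Y(\sigma)\|_{L^{2p}}^{2p}\,d\sigma,
\]
whose kernel integrability requires $\alpha>\rho+1/(2p)$; choosing $p$ large (and recovering smaller moments by Jensen's inequality) this is compatible with $\alpha<1/4$. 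Second, since $Y(\sigma)(\xi)$ is a centered Gaussian variable, $\E|Y(\sigma)(\xi)|^{2p}=c_p\bigl(\E|Y(\sigma)(\xi)|^2\bigr)^p$, and the pointwise variance is controlled, using $e_n^2\le 2$, $\sum_{n\in\N}e^{-2\lambda_n\tau}\le C\tau^{-1/2}$ and $\sigma-s_{\Delta t}\ge\sigma-s$, by
\[
\E|Y(\sigma)(\xi)|^2\le 2\int_0^\sigma(\sigma-s)^{-2\alpha}\sum_{n\in\N}e^{-2\lambda_n(\sigma-s_{\Delta t})}\,ds\le C\int_0^\sigma(\sigma-s)^{-2\alpha-\frac12}\,ds\le C,
\]
finite precisely because $\alpha<1/4$, and uniform in $\sigma\le T$ and in $\Delta t$ because the contraction $e^{(s-s_{\Delta t})A}$ only improves the bound. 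Integrating over $\xi$, combining, and using $\sup_n|\omega_n|_E\le\sup_{t\le T}\|Z(t)\|_E$ gives the claim for $\Gamma^{\mathrm{expo}}$.

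For $\Gamma^{\mathrm{imp}}$ the same scheme applies with $(I-\Delta t A)^{-(n-k)}$ in place of the semigroup, but now there is no exact semigroup interpolation, so I would either run a \emph{discrete} factorization (replacing the beta integral by the comparable sum $\sum_{k<m\le n}((n-m)\Delta t)^{\alpha-1}((m-k)\Delta t)^{-\alpha}\Delta t$, which is bounded above and below uniformly in $\Delta t$) or exploit the subordination representation $(I-\Delta t A)^{-m}=\int_0^\infty g_m(\rho)e^{\rho\Delta t A}\,d\rho$ with $g_m$ the Gamma$(m)$ density. Either route reduces everything to reproducing, uniformly in $\Delta t$, the discrete smoothing estimate $\|(-A)^\beta(I-\Delta t A)^{-m}\|\le C(m\Delta t)^{-\beta}$ and the matching Hilbert--Schmidt variance bound, which the subordination formula delivers through $\Gamma(m-\beta)/\Gamma(m)\sim m^{-\beta}$. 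I expect this implicit case to be the main obstacle: the discrete factorization bookkeeping and the verification that the variance sums converge with constants \emph{independent} of $\Delta t$ (in place of the soft contraction argument available for $\Gamma^{\mathrm{expo}}$) are exactly where the uniformity in $\Delta t$ could break down.
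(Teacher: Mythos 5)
Your proposal is correct in substance but takes a genuinely different route from the paper. The paper handles all three cases by building a \emph{continuous-time interpolation} $\tilde{\omega}(t)$ of $\omega_n$ (e.g.\ $\tilde{\omega}(t)=\omega_n+(t-n\Delta t)AS_{\Delta t}^{\rm imp}\omega_n+\int_{n\Delta t}^{t}S_{\Delta t}^{\rm imp}dW(s)$ for the implicit scheme), asserting two-parameter increment bounds $\E|\tilde{\omega}(t,\xi_2)-\tilde{\omega}(t,\xi_1)|^{2p}\le C|\xi_2-\xi_1|^{4\alpha p}$ and $\E|\tilde{\omega}(t_2,\xi)-\tilde{\omega}(t_1,\xi)|^{2p}\le C|t_2-t_1|^{2\alpha p}$ uniformly in $\Delta t$, and concluding by the Kolmogorov regularity criterion together with the vanishing of $\tilde{\omega}$ at $t=0$ and at the spatial boundary; the increment estimates themselves are left to the reader. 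You instead run the Da Prato--Zabczyk factorization method directly on the discrete convolution. Your treatment of $\Gamma^{\rm exact}$ coincides with the paper's, and your treatment of $\Gamma^{\rm expo}$ is complete and correct: the identification $\omega_n=Z(n\Delta t)$ with $Z(t)=\int_0^te^{(t-s_{\Delta t})A}dW(s)$, the factorization identity, the exponent bookkeeping ($\rho>1/(4p)$, $\alpha>\rho+1/(2p)$, $\alpha<1/4$, hence $p$ large and Jensen for small moments), and the uniformity in $\Delta t$ via $\sigma-s_{\Delta t}\ge\sigma-s$ all check out; this is arguably \emph{more} detailed than the paper's sketch, at the price of being less unified across the three cases. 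For $\Gamma^{\rm imp}$ your argument remains a plan, but it correctly isolates the two estimates everything reduces to — the discrete smoothing bound $\|(-A)^\beta(I-\Delta tA)^{-m}\|\le C(m\Delta t)^{-\beta}$ and the variance bound $\sum_n(1+\Delta t\lambda_n)^{-2m}\le C(m\Delta t)^{-1/2}$, both of which do hold uniformly in $\Delta t$ — and the subordination formula is a legitimate way to obtain them; this is no less complete than what the paper offers for the same case. One small caveat: as written, your discrete beta sum $\sum_{k<m\le n}((n-m)\Delta t)^{\alpha-1}((m-k)\Delta t)^{-\alpha}\Delta t$ has an undefined term at $m=n$; the standard discrete factorization shifts the index (e.g.\ uses $((n-m+1)\Delta t)^{\alpha-1}$), which is exactly the kind of bookkeeping you yourself flag as the delicate point. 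The trade-off between the two approaches is that the paper's interpolation trick converts the implicit scheme into a process with tractable increments and lets a single soft argument (Kolmogorov) cover both discrete schemes, whereas your factorization gives the supremum bound directly and quantitatively but forces you to confront the absence of a semigroup structure in the implicit case head-on.
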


%{\color{blue}
%}
Before sketching the proof of Lemma~\ref{lem:moment_omega}, we provide a detailed proof of Proposition~\ref{propo:moment}.
\begin{proof}[Proof of Proposition~\ref{propo:moment}]
Let $r_n=X_n-\omega_n$, for $n\in\left\{0,\ldots,N_{T,\Delta t}\right\}$.

Then $r_0=x_0$, and (with $S_{\Delta t}=S_{\Delta t}^{{\rm exact}}=e^{\Delta tA}$ when $\Gamma=\Gamma^{{\rm exact}}$)
\begin{align*}
r_{n+1}&=X_{n+1}-\omega_{n+1}=S_{\Delta t}\bigl(\Phi_{\Delta t}(X_n)-\omega_n\bigr)\\
&=S_{\Delta t}\bigl(\Phi_{\Delta t}(r_n+\omega_n)-\Phi_{\Delta t}(\omega_n)\bigr)+S_{\Delta t}\bigl(\Phi_{\Delta t}(\omega_n)-\omega_n\bigr).
\end{align*}

In addition, the linear operator $S_{\Delta t}\in\left\{S_{\Delta t}^{{\rm exact}},S_{\Delta t}^{{\rm expo}},S_{\Delta t}^{{\rm imp}}\right\}$ satisfies $|S_{\Delta t}x|_E\le |x|_E$, for all $x\in E$ and all $\Delta t>0$, as a consequence of the maximum principle for the Laplace operator. On the one-hand, using Lipschitz continuity of $\Phi_{\Delta t}$, see Lemma~\ref{lem:Lip_Phi}, then
\[
\big|S_{\Delta t}\bigl(\Phi_{\Delta t}(r_n+\omega_n)-\Phi_{\Delta t}(\omega_n)\bigr)\big|_E\le e^{\Delta t}|r_n|_E.
\]
On the other hand, thanks to Lemma~\ref{lem:Lip_Psi}, and the identity $\Phi_{\Delta t}(z)-z=\Delta t\Psi_{\Delta t}(z)$,
\[
\big|S_{\Delta t}\bigl(\Phi_{\Delta t}(\omega_n)-\omega_n\bigr)\big|_E\le C(\Delta t_0)\Delta t(1+|\omega_n|_E^4).
\]
The last two estimates prove that
\[
|r_{n+1}|_E\le e^{\Delta t}|r_n|_E+C(\Delta t_0)\Delta t(1+|\omega_n|_E^4),
\]
and by discrete Gronwall's Lemma, for all $n\in\left\{0,\ldots,N_{T,\Delta t}\right\}$,
\[
|r_n|_E\le C(T,\Delta t_0)\bigl(1+|x_0|_E+\underset{0\le m\le N_{T,\Delta t}}\sup|\omega_m|_E^4\bigr).
\]
Applying the estimate of Lemma~\ref{lem:moment_omega} then concludes the proof of Lemma~\ref{propo:moment}.
\end{proof}

%{\color{blue}
%}
It remains to give a sketch of proof of Lemma~\ref{lem:moment_omega}.
\begin{proof}[Sketch of proof of Lemma~\ref{lem:moment_omega}]
When $\Gamma=\Gamma^{{\rm exact}}$, then $\omega_n=W^A(n\Delta t)$, and thus the result is a straightforward consequence of~\eqref{eq:moment_stochastic-convolution}.

When $\Gamma=\Gamma^{{\rm expo}}$, define
\[
\tilde{\omega}(t)=e^{(t-n\Delta t)A}\omega_n+\int_{n\Delta t}^{t}e^{\Delta t A}dW(s)~,\quad n\Delta t\le t\le (n+1)\Delta t.
\]
When $\Gamma=\Gamma^{{\rm imp}}$, define
\[
\tilde{\omega}(t)=\omega_n+(t-n\Delta t)AS_{\Delta t}^{{\rm imp}}\omega_n+\int_{n\Delta t}^{t}S_{\Delta t}^{{\rm imp}}dW(s)~,\quad n\Delta t\le t\le (n+1)\Delta t.
\]
Note that $\tilde{\omega}$ is a continuous process, with values in $E$, and satisfies $\tilde{\omega}(n\Delta t)=\omega_n$. For all $t\ge 0$ and $\xi\in[0,1]$, let $\tilde{\omega}(t,\xi)=\tilde{\omega}(t)(\xi)$.

We claim that, for all $T\in(0,\infty)$, $p\in\N$, $\Delta t_0\in(0,1)$, and $\alpha\in(0,\frac{1}{4})$, there exists $C_{\alpha,p}(T,\Delta t_0)$ such that for all $\Delta t\in(0,\Delta t_0]$,
\begin{align*}
\E\big|\tilde{\omega}(t,\xi_2)-\tilde{\omega}(t,\xi_1)\big|^{2p}&\le C_{\alpha,p}(T,\Delta t_0)|\xi_2-\xi_1|^{4\alpha p}~,\quad \forall~t\in[0,T],~\xi_1,\xi_2\in[0,1],\\
\E\big|\tilde{\omega}(t_2,\xi)-\tilde{\omega}(t_1,\xi)\big|^{2p}&\le C_{\alpha,p}(T,\Delta t_0)|t_2-t_1|^{2\alpha p}~,\quad \forall~t_1,t_2\in[0,T],~\xi\in[0,1].
\end{align*}
The proof of this statement uses only standard arguments (see~\cite{DaPratoZabczyk:14}), it is left to the reader. Note that $\tilde{\omega}(0)=0$, and $\tilde{\omega}(t,0)=\tilde{\omega}(1)=0$ for all $t\ge 0$. Then the application of the Kolmogorov regularity criterion concludes the proof.
\end{proof}

\section{Convergence analysis of the splitting schemes}\label{sec:convergence}

%{\color{blue}
%}
Our main result, Theorem~\ref{theo:main}, and several consequences, are stated in Section~\ref{sec:convergence_statements}. Section~\ref{sec:convergence_proof1} is devoted to a detailed proof of Theorem~\ref{theo:main}, and the other results are then proved in Section~\ref{sec:convergence_proof2}.

\subsection{Statements}\label{sec:convergence_statements}

%convergence in $L^2$ sense, no order of convergence

%convergence in probability, order $1/4$
Our main result is the following.
\begin{theo}\label{theo:main}
Let $T\in(0,\infty)$, $\Delta t_0\in(0,1)$, and $\alpha\in(0,\frac{1}{4})$. There exists $C_{\alpha}(T,\Delta t_0)\in(0,\infty)$, such that, for every $\Delta t\in(0,\Delta t_0],~M\in\N$  and $x_0\in E$, with $\|(-A)^\alpha x_0\|_H<\infty$, then
\[
\E\bigl[\mathds{1}_{\Omega_{M}^{(\Delta t)}(T)}\sup_{0\le n\le N_{T,\Delta t}}\|X_n-X(n\Delta t)\|_H^2\bigr]\le K_\alpha(M,T,\Delta t_0)\Delta t^{2\alpha}(1+|x_0|_E^8+\|(-A)^\alpha x_0\|_H^2)
\]
with $K_\alpha(M,T,\Delta t_0)\le C_{\alpha}(T,\Delta t_0)(1+M^6)\exp(C_{\alpha}(T,\Delta t_0)M^6)$, and
\[
\Omega_{M}^{(\Delta t)}(T)=\left\{\underset{0\le k\Delta t\le T}\sup|X_k|_E+\underset{0\le t\le T}\sup|X^{(\Delta t)}(t)|_E\le M\right\}.
\]
Moreover, there exists $C(T,\Delta t_0)$ such that for every $x_0\in E$, $\Delta t\in(0,\Delta t_0]$ and every $M\in\N$
\[
\PP\bigl(\Omega_{M}^{(\Delta t)}(T)^c\bigr)\le \frac{C(T,\Delta t_0)(1+|x_0|_E)}{M}.
\]
\end{theo}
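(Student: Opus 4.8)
The plan is to route the comparison between the scheme $X_n$ and the exact solution $X(n\Delta t)$ through the auxiliary equation~\eqref{eq:aux}, writing
\[
X_n - X(n\Delta t) = \bigl(X_n - X^{(\Delta t)}(n\Delta t)\bigr) + \bigl(X^{(\Delta t)}(n\Delta t) - X(n\Delta t)\bigr),
\]
and bounding the two contributions separately. The second is a \emph{modeling} error between two SPDEs driven by the same noise, which I expect to be of order $\Delta t$; the first is the genuine \emph{discretization} error of an Euler-type scheme for~\eqref{eq:aux} and carries the order $\Delta t^\alpha$. The tail estimate is immediate: by Markov's inequality, $\PP\bigl(\Omega_{M}^{(\Delta t)}(T)^c\bigr) \le M^{-1}\E\bigl[\sup_k|X_k|_E + \sup_t|X^{(\Delta t)}(t)|_E\bigr]$, and Jensen's inequality applied to Propositions~\ref{propo:moment} and~\ref{propo:aux} (with $p=1$) bounds the expectation by $C(T,\Delta t_0)(1+|x_0|_E)$.

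For the modeling error I set $D(t) = X^{(\Delta t)}(t) - X(t)$, which solves $\dot D = AD + \Psi_{\Delta t}(X^{(\Delta t)}) - \Psi_0(X)$ with $D(0)=0$, pathwise. Testing against $D$ and using $\langle AD,D\rangle \le 0$, I split $\Psi_{\Delta t}(X^{(\Delta t)}) - \Psi_0(X) = \bigl(\Psi_{\Delta t}(X^{(\Delta t)}) - \Psi_{\Delta t}(X)\bigr) + \bigl(\Psi_{\Delta t}(X) - \Psi_0(X)\bigr)$. The one-sided Lipschitz bound of Lemma~\ref{lem:one-sided} controls the first piece by $e^{\Delta t}\|D\|_H^2$, while Lemma~\ref{lem:errorPsi} gives $\|\Psi_{\Delta t}(X)-\Psi_0(X)\|_H \le C\Delta t(1+|X|_E^5)$ for the second. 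A Gronwall argument then yields $\sup_{t\le T}\|D(t)\|_H^2 \le C\Delta t^2\sup_{t\le T}(1+|X(t)|_E^{10})$, and taking expectations with the moment bound of Proposition~\ref{propo:AC} shows this contribution is $O(\Delta t^2)$ in mean square, with no localization needed.

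The main work, and the main obstacle, is the discretization error $e_n = X_n - X^{(\Delta t)}(n\Delta t)$, for which I would use the mild formulations of both objects. For $\Gamma^{{\rm exact}}$ the stochastic convolutions coincide exactly, so
\[
e_n = \sum_{k=0}^{n-1}\int_{k\Delta t}^{(k+1)\Delta t}\bigl[e^{(n-k)\Delta t A}\Psi_{\Delta t}(X_k) - e^{(n\Delta t - s)A}\Psi_{\Delta t}(X^{(\Delta t)}(s))\bigr]\,ds,
\]
which I decompose into (i) a Lipschitz term $e^{(n-k)\Delta t A}\bigl(\Psi_{\Delta t}(X_k)-\Psi_{\Delta t}(X^{(\Delta t)}(k\Delta t))\bigr)$, (ii) a temporal-regularity term involving $\Psi_{\Delta t}(X^{(\Delta t)}(k\Delta t))-\Psi_{\Delta t}(X^{(\Delta t)}(s))$, and (iii) a semigroup term $\bigl(e^{(n-k)\Delta t A}-e^{(n\Delta t-s)A}\bigr)\Psi_{\Delta t}(X^{(\Delta t)}(s))$. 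On the event $\Omega_{M}^{(\Delta t)}(T)$ both solutions stay below $M$ in $E$-norm, so the local Lipschitz estimate of Lemma~\ref{lem:Lip_Psi} turns $\Psi_{\Delta t}$ into a Lipschitz map with constant $\lesssim M^3$; term (i) then feeds the recursion $\mathds{1}_{\Omega_{M}^{(\Delta t)}(T)}\|e_n\|_H \lesssim M^3\Delta t\sum_{k<n}\mathds{1}_{\Omega_{M}^{(\Delta t)}(T)}\|e_k\|_H + (\text{consistency})$. Term (ii) is controlled by the temporal Hölder regularity (exponent $\alpha<1/4$) of the mild solution of~\eqref{eq:aux}; the deterministic initial layer $\bigl(e^{(s-k\Delta t)A}-I\bigr)e^{k\Delta t A}x_0$ is exactly where the hypothesis $\|(-A)^\alpha x_0\|_H<\infty$ enters, producing the $\Delta t^\alpha$ rate and the $\|(-A)^\alpha x_0\|_H^2$ dependence. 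Term (iii) is handled by the smoothing bounds $\|(e^{\tau A}-I)(-A)^{-\alpha}\|\lesssim \tau^\alpha$ and $\|(-A)^\alpha e^{\sigma A}\|\lesssim \sigma^{-\alpha}$ together with the growth bound $|\Psi_{\Delta t}(z)|\le C(1+|z|^4)$ of Lemma~\ref{lem:Lip_Psi}, the singularity $(n\Delta t-s)^{-\alpha}$ being integrable. Squaring, taking expectations, and applying the discrete Gronwall lemma to $\E\bigl[\mathds{1}_{\Omega_{M}^{(\Delta t)}(T)}\sup_{m\le n}\|e_m\|_H^2\bigr]$ produces the localized constant $K_\alpha(M,T,\Delta t_0)\le C(1+M^6)\exp(CM^6)$, the exponential coming from the squared localized Lipschitz constant $\sim M^6$.

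I expect the delicate points to be: (a) propagating the single trajectory-dependent indicator $\mathds{1}_{\Omega_{M}^{(\Delta t)}(T)}$ consistently through the recursion (it is fixed, not adapted, so one uses the bound $\mathds{1}_{\Omega_{M}^{(\Delta t)}(T)}\|\Psi_{\Delta t}(X_k)-\Psi_{\Delta t}(X^{(\Delta t)}(k\Delta t))\|_H \le CM^3\mathds{1}_{\Omega_{M}^{(\Delta t)}(T)}\|e_k\|_H$ and only afterwards takes expectations, in the spirit of the localization of Bessaih--Brzezniak--Millet), and (b) obtaining the $\Delta t^\alpha$ temporal Hölder bound for~\eqref{eq:aux} uniformly in $\Delta t$, which rests on Proposition~\ref{propo:aux} and the growth of $\Psi_{\Delta t}$. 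For the variants $\Gamma^{{\rm expo}}$ and $\Gamma^{{\rm imp}}$ the stochastic convolutions no longer cancel, so one adds the classical $L^2(\Omega;H)$ discretization error of the stochastic convolution, of order $\Delta t^\alpha$ by Itô's isometry and the same semigroup estimates; this term needs no localization. Combining the three contributions of the discretization error with the modeling error yields the stated bound.
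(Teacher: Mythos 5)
Your proposal is correct and follows essentially the same route as the paper: reduce to the auxiliary equation~\eqref{eq:aux} via the $O(\Delta t^2)$ unlocalized bound of Proposition~\ref{propo:modif_error}, then treat the discretization error through the mild formulation with the same three-term decomposition (localized Lipschitz term, temporal regularity of $X^{(\Delta t)}$, semigroup difference), the stochastic-convolution error of Lemma~\ref{lem:error_noise} for the non-exact integrators, and a localized discrete Gronwall argument yielding the $(1+M^6)\exp(CM^6)$ constant, with the tail bound by Markov's inequality. The only cosmetic difference is that the paper runs the Gronwall recursion with the nested events $\Omega_{n,M}^{(\Delta t)}$ rather than the single indicator $\mathds{1}_{\Omega_{M}^{(\Delta t)}(T)}$, which changes nothing of substance.
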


\begin{rem}
The condition $\|(-A)^\alpha x_0\|_H<\infty$ may be relaxed using standard arguments. If one assumes $\|(-A)^\beta x_0\|_H<\infty$ with $\beta\in[0,\alpha]$, a factor of the type $t_n^{\alpha-\beta}$ needs to be introduced. To simplify notation, we only consider the case $\beta=\alpha$ and leave the details of the general case to the interested readers.
\end{rem}

%{\color{blue}
%}
Let us state three straightforward consequences of Theorem~\ref{theo:main}, presenting other standard ways to describe the error of the numerical scheme. Proofs are postponed to Section~\ref{sec:convergence_proof2}.

\begin{cor}\label{cor:1}
The numerical scheme is mean-square convergent. Precisely, for every $T\in(0,\infty)$, and any initial condition $x_0\in E$, with $\|(-A)^\alpha x_0\|_H<\infty$, then
\[
\E\bigl[\sup_{0\le n\le N_{T,\Delta t}}\|X_n-X(n\Delta t)\|_H^2\bigr]\underset{\Delta t\to 0}\to 0.
\]
\end{cor}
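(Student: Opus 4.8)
The plan is to deduce Corollary~\ref{cor:1} from Theorem~\ref{theo:main} by a standard localization/truncation argument: split the expectation according to whether the event $\Omega_M^{(\Delta t)}(T)$ holds, control the main part using the quantitative estimate of the theorem, and control the remaining part using the smallness of $\PP\bigl(\Omega_M^{(\Delta t)}(T)^c\bigr)$ together with uniform moment bounds. First, I would fix the initial condition $x_0\in E$ with $\|(-A)^\alpha x_0\|_H<\infty$, fix $\alpha\in(0,\frac14)$, and introduce for brevity the error quantity
\[
\mathcal{E}_{\Delta t}=\sup_{0\le n\le N_{T,\Delta t}}\|X_n-X(n\Delta t)\|_H^2.
\]
The elementary decomposition I would use is
\[
\E\bigl[\mathcal{E}_{\Delta t}\bigr]=\E\bigl[\mathds{1}_{\Omega_M^{(\Delta t)}(T)}\mathcal{E}_{\Delta t}\bigr]+\E\bigl[\mathds{1}_{\Omega_M^{(\Delta t)}(T)^c}\mathcal{E}_{\Delta t}\bigr],
\]
valid for every $M\in\N$.

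For the first term, Theorem~\ref{theo:main} gives directly
\[
\E\bigl[\mathds{1}_{\Omega_M^{(\Delta t)}(T)}\mathcal{E}_{\Delta t}\bigr]\le K_\alpha(M,T,\Delta t_0)\,\Delta t^{2\alpha}\bigl(1+|x_0|_E^8+\|(-A)^\alpha x_0\|_H^2\bigr),
\]
which, for each \emph{fixed} $M$, tends to $0$ as $\Delta t\to 0$ because of the factor $\Delta t^{2\alpha}$ with $\alpha>0$. For the second term I would apply the Cauchy--Schwarz inequality to separate the indicator from the error,
\[
\E\bigl[\mathds{1}_{\Omega_M^{(\Delta t)}(T)^c}\mathcal{E}_{\Delta t}\bigr]\le \PP\bigl(\Omega_M^{(\Delta t)}(T)^c\bigr)^{1/2}\,\E\bigl[\mathcal{E}_{\Delta t}^2\bigr]^{1/2}.
\]
The probability factor is bounded by $C(T,\Delta t_0)(1+|x_0|_E)/M$ from the second assertion of Theorem~\ref{theo:main}, so it is small when $M$ is large, uniformly in $\Delta t$. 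For the remaining factor I would use the triangle inequality $\mathcal{E}_{\Delta t}\le 2\sup_n\|X_n\|_H^2+2\sup_{0\le t\le T}\|X(t)\|_H^2$, and then bound the fourth moments of $\mathcal{E}_{\Delta t}$ by the uniform moment estimates: Proposition~\ref{propo:moment} controls $\E[\sup_n|X_n|_E^{8}]$ uniformly in $\Delta t\in(0,\Delta t_0]$, while Proposition~\ref{propo:AC} controls $\E[\sup_{0\le t\le T}|X(t)|_E^{8}]$, both dominating the corresponding $H$-norms since $\|\cdot\|_H\le|\cdot|_E$ on $(0,1)$. Hence $\E[\mathcal{E}_{\Delta t}^2]^{1/2}\le C(T,\Delta t_0)(1+|x_0|_E^4)$, a constant independent of $\Delta t$.

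Combining the two bounds, for every $\Delta t\in(0,\Delta t_0]$ and every $M\in\N$,
\[
\E\bigl[\mathcal{E}_{\Delta t}\bigr]\le K_\alpha(M,T,\Delta t_0)\,\Delta t^{2\alpha}\bigl(1+|x_0|_E^8+\|(-A)^\alpha x_0\|_H^2\bigr)+\frac{C(T,\Delta t_0)(1+|x_0|_E)^{3/2}}{M^{1/2}}.
\]
To finish I would invoke the standard $\varepsilon/2$ argument: given $\varepsilon>0$, first choose $M$ large enough that the second term is below $\varepsilon/2$ (possible since it decays like $M^{-1/2}$ and is uniform in $\Delta t$); then, with this $M$ now fixed, choose $\Delta t$ small enough that the first term is below $\varepsilon/2$ (possible since for fixed $M$ the coefficient $K_\alpha(M,T,\Delta t_0)$ is a constant and $\Delta t^{2\alpha}\to 0$). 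The only subtlety — and the point I would be most careful about — is the order of quantifiers: because $K_\alpha(M,T,\Delta t_0)$ grows like $\exp(C M^6)$, one cannot optimize $M$ and $\Delta t$ simultaneously, but that is precisely why the statement asserts mere convergence with \emph{no order}; the argument never needs a rate, so the exponential growth in $M$ is harmless once $M$ is frozen before $\Delta t\to 0$.
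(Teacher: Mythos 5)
Your proposal is correct and follows essentially the same route as the paper: the same decomposition over $\Omega_M^{(\Delta t)}(T)$ and its complement, the localized estimate from Theorem~\ref{theo:main} for the first piece, and Cauchy--Schwarz combined with the uniform moment bounds of Propositions~\ref{propo:AC} and~\ref{propo:moment} for the second, finishing by sending $\Delta t\to0$ before $M\to\infty$. Your closing remark about the order of quantifiers (freezing $M$ before letting $\Delta t\to0$ because $K_\alpha(M,T,\Delta t_0)$ blows up in $M$) is exactly the point the paper's limsup argument encodes.
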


\begin{cor}\label{cor:2}
The numerical scheme converges in probability with order $\alpha$, for all $\alpha<\frac14$. More precisely, for every $\alpha\in(0,\frac{1}{4}),~K\in(0,\infty)$, and any initial condition $x_0\in E$, with $\|(-A)^\alpha x_0\|_H<\infty$, then
\[
\PP\Bigl(\sup_{0\le n\le N_{T,\Delta t}}\|X_n-X(n\Delta t)\|_H\ge K\Delta t^\alpha\Bigr)\underset{\Delta t\to 0}\to 0.
\]
\end{cor}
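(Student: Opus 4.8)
The plan is to derive Corollary~\ref{cor:2} from the localized strong estimate in Theorem~\ref{theo:main} by exploiting that the bad event $\Omega_M^{(\Delta t)}(T)^c$ has probability controlled uniformly in $\Delta t$. The key idea is a standard two-regime decomposition: on the good event $\Omega_M^{(\Delta t)}(T)$, the $L^2$-error is small (of order $\Delta t^{2\alpha}$, up to an $M$-dependent constant), so a Markov/Chebyshev argument forces the probability of a large deviation to vanish; on the complementary event, we simply bound the probability by $C(1+|x_0|_E)/M$, which we can make as small as we wish by taking $M$ large. Letting $\Delta t \to 0$ first, then $M \to \infty$, yields the claimed convergence in probability.

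More precisely, fix $\alpha \in (0,\tfrac14)$, $K \in (0,\infty)$, and $\varepsilon > 0$. First I would use the tail bound in Theorem~\ref{theo:main} to choose $M = M(\varepsilon)$ large enough that $\PP\bigl(\Omega_M^{(\Delta t)}(T)^c\bigr) \le C(T,\Delta t_0)(1+|x_0|_E)/M \le \varepsilon/2$, uniformly in $\Delta t \in (0,\Delta t_0]$. Then I would split
\[
\PP\Bigl(\sup_{0\le n\le N_{T,\Delta t}}\|X_n-X(n\Delta t)\|_H\ge K\Delta t^\alpha\Bigr)
\le \PP\bigl(\Omega_M^{(\Delta t)}(T)^c\bigr)
+ \PP\Bigl(\mathds{1}_{\Omega_M^{(\Delta t)}(T)}\sup_{0\le n\le N_{T,\Delta t}}\|X_n-X(n\Delta t)\|_H\ge K\Delta t^\alpha\Bigr).
\]
For the second term, Markov's inequality applied to the squared error gives an upper bound
\[
\frac{1}{K^2\Delta t^{2\alpha}}\,\E\bigl[\mathds{1}_{\Omega_M^{(\Delta t)}(T)}\sup_{0\le n\le N_{T,\Delta t}}\|X_n-X(n\Delta t)\|_H^2\bigr]
\le \frac{K_\alpha(M,T,\Delta t_0)}{K^2}\bigl(1+|x_0|_E^8+\|(-A)^\alpha x_0\|_H^2\bigr),
\]
where I have used the localized estimate of Theorem~\ref{theo:main}; note that the $\Delta t^{2\alpha}$ from the Markov denominator cancels exactly against the $\Delta t^{2\alpha}$ in the theorem's bound. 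At this stage the right-hand side is a constant, not yet small, so I must be careful about the order of limits.

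The resolution is that, with $M$ now fixed, the cancellation of the $\Delta t^{2\alpha}$ factors is precisely what we need: I would instead keep one power and write the localized error as $K_\alpha(M,T,\Delta t_0)\Delta t^{2\alpha}(\cdots)$, so that dividing by $K^2\Delta t^{2\alpha}$ leaves a constant, but dividing by $K^2\Delta t^{2\alpha'}$ for any $\alpha' < \alpha$ leaves a factor $\Delta t^{2(\alpha-\alpha')} \to 0$. Since the statement is for \emph{all} $\alpha < \tfrac14$, I would fix some $\alpha'' \in (\alpha, \tfrac14)$, apply Theorem~\ref{theo:main} with the larger exponent $\alpha''$, and bound the second term by $K^{-2} K_{\alpha''}(M,T,\Delta t_0)\Delta t^{2(\alpha''-\alpha)}(1+|x_0|_E^8+\|(-A)^{\alpha''}x_0\|_H^2)$, which tends to $0$ as $\Delta t \to 0$ with $M$ fixed. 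Hence for $\Delta t$ small enough the second term is below $\varepsilon/2$, and the total is below $\varepsilon$; since $\varepsilon$ was arbitrary, convergence in probability follows.

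The main obstacle, and the only genuinely delicate point, is the interplay of the two limits $M \to \infty$ and $\Delta t \to 0$: the constant $K_\alpha(M,T,\Delta t_0)$ grows like $\exp(C M^6)$, so one cannot let $M \to \infty$ and $\Delta t \to 0$ simultaneously without care. The argument above circumvents this by first fixing $M$ large enough to control the tail term uniformly in $\Delta t$, and only afterwards sending $\Delta t \to 0$ with that $M$ held fixed, so that the exponential $M$-dependence is harmless. The requirement $\|(-A)^{\alpha''}x_0\|_H < \infty$ for some $\alpha'' \in (\alpha,\tfrac14)$ is a mild strengthening of the hypothesis; if one wishes to work under the stated assumption $\|(-A)^\alpha x_0\|_H < \infty$ only, one can instead exploit the extra factor from the slightly larger Hölder exponent available in the spatial/temporal regularity, or invoke the relaxation noted in the Remark following Theorem~\ref{theo:main}.
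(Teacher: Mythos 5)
Your proposal is correct and follows essentially the same route as the paper's proof: the same decomposition over $\Omega_M^{(\Delta t)}(T)$ and its complement, Markov's inequality on the localized mean-square error, and the key trick of invoking Theorem~\ref{theo:main} with a strictly larger exponent $\tilde{\alpha}\in(\alpha,\tfrac14)$ so that a factor $\Delta t^{2(\tilde{\alpha}-\alpha)}$ survives, followed by the limits $\Delta t\to 0$ then $M\to\infty$ (your fixing of $M=M(\varepsilon)$ first is an equivalent phrasing). Your explicit remark on the regularity of $x_0$ needed for the larger exponent is a point the paper glosses over and handles only via the remark following Theorem~\ref{theo:main}.
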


\begin{cor}\label{cor:3}
Let $T\in(0,\infty)$, $\Delta t_0\in(0,1)$, and $\alpha\in(0,\frac{1}{4})$. For every $\epsilon\in(0,1)$, there exists $M=M_{\alpha}(\epsilon,T,\Delta t_0)\in(0,\infty)$ and $C=C_{\alpha}(\epsilon,T,\Delta t_0)\in(0,\infty)$ such that for any bounded Lipschitz continuous function $\varphi:H\to\R$, with $\|\varphi\|_\infty+{\rm Lip}(\varphi)\le 1$, and for every $\Delta t\in(0,\Delta t_0]$  and $x_0\in E$, with $\|(-A)^\alpha x_0\|_H<\infty$, then
\[
\big|\E\bigl[\varphi(X(T))\bigr]-\E\bigl[\varphi(X_{N_{T,\Delta t}})\mathds{1}_{\Omega_{M}^{(\Delta t)}(T)^c}\bigr]\big|\le \epsilon+C\Delta t^\alpha.
\]
\end{cor}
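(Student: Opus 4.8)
The plan is to derive the corollary directly from Theorem~\ref{theo:main} and the accompanying tail bound on $\PP\bigl(\Omega_M^{(\Delta t)}(T)^c\bigr)$, by splitting the quantity to be controlled into a \emph{localized mean-square error} and an \emph{exploding-trajectories} contribution. (I read the indicator as that of the event $\Omega_M^{(\Delta t)}(T)$ on which the trajectories remain bounded, consistently with the description ``rejecting exploding trajectories''.) Abbreviating $N=N_{T,\Delta t}$ and $\Omega_M=\Omega_M^{(\Delta t)}(T)$, I would insert $\E[\varphi(X(T))\mathds{1}_{\Omega_M}]$ and write
\[
\E[\varphi(X(T))]-\E[\varphi(X_N)\mathds{1}_{\Omega_M}]
=\E\bigl[(\varphi(X(T))-\varphi(X_N))\mathds{1}_{\Omega_M}\bigr]+\E\bigl[\varphi(X(T))\mathds{1}_{\Omega_M^c}\bigr].
\]
The second term will be absorbed into $\epsilon$ through the probability of explosion, and the first into $C\Delta t^\alpha$ through the localized strong error.

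For the explosion term I would use only $\|\varphi\|_\infty\le 1$: since $|\varphi(X(T))|\le 1$,
\[
\bigl|\E[\varphi(X(T))\mathds{1}_{\Omega_M^c}]\bigr|\le \PP(\Omega_M^c)\le \frac{C(T,\Delta t_0)(1+|x_0|_E)}{M},
\]
by the last estimate of Theorem~\ref{theo:main}. I would then fix $M=M_\alpha(\epsilon,T,\Delta t_0)$ (in fact also depending on $|x_0|_E$) large enough that this is $\le\epsilon$. It is important that this choice is made \emph{first}, because $M$ subsequently enters the localized error through the constant $K_\alpha(M,T,\Delta t_0)$.

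For the strong-error term I would use ${\rm Lip}(\varphi)\le 1$, the triangle inequality, and $\mathds{1}_{\Omega_M}\le 1$ on the last (unlocalized) piece:
\[
\bigl|\E[(\varphi(X(T))-\varphi(X_N))\mathds{1}_{\Omega_M}]\bigr|
\le \E\bigl[\mathds{1}_{\Omega_M}\|X_N-X(N\Delta t)\|_H\bigr]+\E\|X(T)-X(N\Delta t)\|_H.
\]
By Cauchy--Schwarz and Theorem~\ref{theo:main}, the first term is at most $\sqrt{K_\alpha(M,T,\Delta t_0)}\,\Delta t^\alpha\,(1+|x_0|_E^8+\|(-A)^\alpha x_0\|_H^2)^{1/2}$, i.e.\ $C\Delta t^\alpha$ once $M$ is fixed. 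For the second term I would invoke the temporal H\"older regularity in $H$ of the exact mild solution: since $0\le T-N\Delta t<\Delta t$, a standard estimate (as in~\cite{DaPratoZabczyk:14}, combining the regularity of the stochastic convolution, the smoothing of the semigroup acting on the drift, and the assumption $\|(-A)^\alpha x_0\|_H<\infty$) gives $\E\|X(T)-X(N\Delta t)\|_H^2\le C|T-N\Delta t|^{2\alpha}\le C\Delta t^{2\alpha}$, hence a further $C\Delta t^\alpha$. Summing the three contributions yields the bound $\epsilon+C\Delta t^\alpha$.

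I do not expect a substantial obstacle here, in line with the fact that this is advertised as a consequence of Theorem~\ref{theo:main}; the only genuine points of care are the ordering of the quantifiers (choose $M$ first so that the explosion term is $\le\epsilon$, after which $K_\alpha(M,\cdot)$ is a fixed, $\epsilon$-dependent, large constant) and the terminal-time gap $T-N\Delta t<\Delta t$, which prevents a direct single-grid-point application of Theorem~\ref{theo:main} and forces the use of the temporal regularity of the exact solution.
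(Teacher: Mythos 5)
Your proposal is correct and follows essentially the same route as the paper: insert the indicator of $\Omega_M^{(\Delta t)}(T)$ (the $^c$ in the statement is a typo, as you correctly inferred), choose $M$ first so that the tail bound makes the explosion term at most $\epsilon$, and then bound the localized term via ${\rm Lip}(\varphi)\le 1$ and Theorem~\ref{theo:main}. Your explicit handling of the terminal gap $T-N_{T,\Delta t}\Delta t<\Delta t$ via the temporal regularity of the exact solution is a point of care that the paper's proof silently glosses over, so if anything your write-up is slightly more complete.
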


%{\color{blue}
%}
This weak convergence result is not expected to be optimal. First, it is based on the concept of rejecting exploding trajectories: with a more technical analysis, it may be possible to remove the $\epsilon$ error term. Second, as will be confirmed by the numerical experiments below, the order of convergence $\alpha$ may be replaced with $2\alpha$, using the standard weak convergence analysis, for functions $\varphi$ of class $\mathcal{C}^2$, bounded and with bounded derivatives. These improvements will be investigated in future works.

\subsection{Proof of Theorem~\ref{theo:main}}\label{sec:convergence_proof1}

\subsubsection{Two auxiliary lemmas}

We first state two auxiliary results.
\begin{lemma}\label{lem:error_noise}
Let $T\in(0,\infty)$ and $\Delta t_0\in(0,1)$. For every $\alpha<\frac{1}{4}$, there exists $C_\alpha(T,\Delta t_0)\in(0,\infty)$ such that for all $\Delta t\in(0,\Delta t_0]$
\[
\E\bigl[\underset{0\le n\le N_{T,\Delta t}}\sup \|\omega_n-W^A(n\Delta t)\|_{H}^{2}\bigr]\le C_\alpha(T,\Delta t_0)\Delta t^{2\alpha}.
\]
\end{lemma}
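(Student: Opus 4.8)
The plan is to treat the three choices of $\Gamma$ separately, exploiting that $\omega_n$ and $W^A(n\Delta t)$ are both centered, $H$-valued Gaussian variables obtained by linear operations on the same noise. When $\Gamma=\Gamma^{{\rm exact}}$ one has exactly $\omega_n=W^A(n\Delta t)$, so the error vanishes identically and there is nothing to prove. It thus suffices to handle $\Gamma\in\{\Gamma^{{\rm expo}},\Gamma^{{\rm imp}}\}$. Setting $\rho_n=\omega_n-W^A(n\Delta t)$, I would proceed in two stages: first establish a pointwise-in-$n$ second moment bound of the sharp order $\Delta t^{1/2}$, then upgrade to the supremum using the Gaussian structure.

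For the pointwise bound I would diagonalize everything in the orthonormal basis $(e_j)_j$. Writing $\rho_n^{(j)}=\langle\rho_n,e_j\rangle$ and using $Ae_j=-\lambda_je_j$, each $\rho_n^{(j)}$ is a stochastic integral against the scalar Brownian motion $\beta_j$, and the $\rho_n^{(j)}$ are independent across $j$. In the exponential case the exact convolution contributes the kernel $e^{-\lambda_j(n\Delta t-s)}$ on $[k\Delta t,(k+1)\Delta t)$, while the scheme contributes the piecewise-constant kernel $e^{-\lambda_j(n-k)\Delta t}$; by It\^o's isometry,
\[
\E\|\rho_n\|_H^2=\sum_{j}\sum_{k=0}^{n-1}\int_{k\Delta t}^{(k+1)\Delta t}\bigl(e^{-\lambda_j(n-k)\Delta t}-e^{-\lambda_j(n\Delta t-s)}\bigr)^2\,ds.
\]
Bounding the squared inner factor by $e^{-2\lambda_j(n\Delta t-s)}\min(\lambda_j\Delta t,1)^2$ and integrating in $s$ yields $\E\|\rho_n\|_H^2\le\sum_j\frac{\min(\lambda_j\Delta t,1)^2}{2\lambda_j}$. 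Splitting this sum at the threshold $\lambda_j\approx\Delta t^{-1}$ (low modes contributing $\Delta t^2\sum_{\lambda_j\le\Delta t^{-1}}\lambda_j$ and high modes $\sum_{\lambda_j>\Delta t^{-1}}\lambda_j^{-1}$) and using $\lambda_j=j^2\pi^2$ gives $\E\|\rho_n\|_H^2\le C\Delta t^{1/2}$, uniformly in $n\le N_{T,\Delta t}$. For $\Gamma=\Gamma^{{\rm imp}}$ the kernel $e^{-\lambda_j(n-k)\Delta t}$ is replaced by the rational one $(1+\lambda_j\Delta t)^{-(n-k)}$, and the same computation goes through using the standard one-step estimate $|(1+x)^{-1}-e^{-x}|\le Cx^2$ together with the contractivity $0\le(1+x)^{-1}\le 1$, again producing the order $\Delta t^{1/2}$.

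To pass to the supremum I would use that $\rho_n$ is a centered $H$-valued Gaussian variable, so all its moments are controlled by the second one: $\E\|\rho_n\|_H^{2p}\le C_p(\E\|\rho_n\|_H^2)^p\le C_p\Delta t^{p/2}$ for every $p\in\N$. A plain union bound over the grid then gives
\[
\E\bigl[\sup_{0\le n\le N_{T,\Delta t}}\|\rho_n\|_H^{2p}\bigr]\le\sum_{n=0}^{N_{T,\Delta t}}\E\|\rho_n\|_H^{2p}\le C_p\,(1+N_{T,\Delta t})\,\Delta t^{p/2}\le C_p(T)\,\Delta t^{p/2-1},
\]
and Jensen's inequality reduces this back to the second moment, $\E[\sup_n\|\rho_n\|_H^2]\le(C_p(T))^{1/p}\Delta t^{1/2-1/p}$. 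Given $\alpha<\frac14$, i.e. $2\alpha<\frac12$, it then suffices to fix $p$ with $\frac12-\frac1p\ge2\alpha$ to conclude.

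The main obstacle is exactly the tension visible in the last display: the naive union bound at $p=1$ loses a full factor $\Delta t^{-1}$ from the $N_{T,\Delta t}\approx T/\Delta t$ grid points and would blow up, so the supremum can only be controlled after investing in high moments — which is precisely where the restriction $\alpha<\frac14$ (rather than the pointwise rate $\frac14$) enters. Obtaining the clean $\Delta t^{1/2}$ in the pointwise step also hinges on the correct low/high-frequency split of the spectral sum, any crude treatment of which would degrade the exponent. Everything else is routine Gaussian computation; an alternative to the union bound would be to introduce a continuous interpolation in the spirit of the sketch of Lemma~\ref{lem:moment_omega} and invoke Kolmogorov's criterion, but the moment argument above seems more direct here.
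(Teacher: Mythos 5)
Your proof is correct and follows essentially the same route as the paper: a pointwise-in-$n$ moment bound of order $\Delta t^{1/4}$ per power, then a union bound over the $O(\Delta t^{-1})$ grid points combined with Jensen's inequality, absorbing the lost factor $\Delta t^{-1/p}$ by taking $p$ large — which is exactly where $\alpha<\tfrac14$ enters in both arguments. The only difference is that you derive the pointwise estimate explicitly (spectral decomposition, It\^o isometry, low/high-frequency splitting, and Gaussian moment equivalence to pass from the second to the $2p$-th moment), whereas the paper simply cites it from the literature.
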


\begin{proof}[Proof of Lemma~\ref{lem:error_noise}]
If $\Gamma=\Gamma^{{\rm exact}}$, then $\omega_n=W^A(n\Delta t)$, and there is nothing to prove.
When $\Gamma=\Gamma^{{\rm expo}}$ or $\Gamma^{{\rm imp}}$, then
\[
\omega_n=\sum_{k=0}^{n-1}\int_{k\Delta t}^{(k+1)\Delta t}S_{\Delta t}^{n-k}dW(t).
\]
and it is known that, for all $p\in\N$, there exists $C_{p,\alpha}(T,\Delta t_0)$ such that for all $\Delta t\in(0,\Delta t_0]$,
\[
\underset{0\le n\le N_{T,\Delta t}}\sup \E\|\omega_n-W^A(n\Delta t)\|_{H}^{p}\le C_{p,\alpha}(T,\Delta t_0)\Delta t^{\alpha p}.
\]
See for instance~\cite{Printems:01} for details.

It remains to control the expectation of the supremum. Let $\alpha\in(0,\frac14)$, and, set $\tilde{\alpha}=\alpha+\frac{1}{p}$, with $p\in\N$, $p\ge 2$, chosen sufficiently large to have $\tilde{\alpha}<\frac14$. Then
\begin{align*}
\E\bigl[\underset{0\le n\le N_{T,\Delta t}}\sup\|\omega_n-W^A(n\Delta t)\|_{H}^{p}\bigr]&\le \E\bigl[\underset{0\le n\le N_{T,\Delta t}}\sum\|\omega_n-W^A(n\Delta t)\|_{H}^{p}\bigr]\\
&\le N_{T,\Delta t}C_{p,\tilde{\alpha}}(T,\Delta t_0)\Delta t^{\tilde{\alpha} p}\\
&\le TC_{p,\tilde{\alpha}}(T,\Delta t_0)\Delta t^{\tilde{\alpha}p-1},
\end{align*}
with $\tilde{\alpha}p-1=p\alpha$. Then, since $p\ge 2$
\[
\Bigl(\E\bigl[\underset{0\le n\le N_{T,\Delta t}}\sup\|\omega_n-W^A(n\Delta t)\|_{H}^{2}\Bigr)^{\frac12}\le \Bigl(\E\bigl[\underset{0\le n\le N_{T,\Delta t}}\sup\|\omega_n-W^A(n\Delta t)\|_{H}^{p}\Bigr)^{\frac{1}{p}}\le C_{\alpha}(T,\Delta t_0)\Delta t^\alpha.
\]
This concludes the proof.
\end{proof}

\begin{lemma}\label{lem:regul_time}
Let $T\in(0,\infty)$ and $\Delta t_0\in(0,1)$. For every $p\in\N$ and $\alpha<\frac{1}{4}$, there exists $C_{p,\alpha}(T)\in(0,\infty)$, such that for any initial condition $x_0$ with $\|(-A)\|^\alpha x_0\|_H<\infty$,  and all $\Delta t\in(0,\Delta t_0]$, then for all $t,s\in[0,T]$,
\[
\E\big[\|X^{(\Delta t)}(t)-X^{(\Delta t)}(s)\|_H^{2p}\bigr]\le C_{p,\alpha}(T)(1+\|(-A)\|^\alpha x_0\|_H^{2p})\Delta t^{2p\alpha}.
\]
\end{lemma}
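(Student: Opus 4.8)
The plan is to work from the mild formulation of the auxiliary equation~\eqref{eq:aux},
\[
X^{(\Delta t)}(t)=e^{tA}x_0+\int_0^t e^{(t-r)A}\Psi_{\Delta t}\bigl(X^{(\Delta t)}(r)\bigr)\,dr+W^A(t),
\]
and to estimate the increment $X^{(\Delta t)}(t)-X^{(\Delta t)}(s)$, for $0\le s\le t\le T$, by splitting it into the homogeneous term $(e^{tA}-e^{sA})x_0$, the stochastic convolution increment $W^A(t)-W^A(s)$, and the deterministic drift increment. What I would actually prove is the natural H\"older-in-time bound $\E\|X^{(\Delta t)}(t)-X^{(\Delta t)}(s)\|_H^{2p}\le C|t-s|^{2p\alpha}$, uniformly in $\Delta t\in(0,\Delta t_0]$; since the lemma is only invoked for $s,t$ lying in a common subinterval $[n\Delta t,(n+1)\Delta t]$, the inequality $|t-s|\le \Delta t$ then produces the stated $\Delta t^{2p\alpha}$. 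Throughout I would take $L^{2p}(\Omega)$-norms and use Minkowski's inequality to treat the three contributions separately.

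The homogeneous and stochastic terms are handled by standard tools for the analytic semigroup $\bigl(e^{tA}\bigr)_{t\ge 0}$. For the first, I would write $e^{tA}-e^{sA}=e^{sA}\bigl(e^{(t-s)A}-I\bigr)$ and use the smoothing estimate $\|(e^{(t-s)A}-I)(-A)^{-\alpha}\|_{\mathcal L(H)}\le C(t-s)^\alpha$, which gives $\|(e^{tA}-e^{sA})x_0\|_H\le C(t-s)^\alpha\|(-A)^\alpha x_0\|_H$ and accounts for the $\|(-A)^\alpha x_0\|_H$ factor in the statement. For the stochastic convolution, the bound $\E\|W^A(t)-W^A(s)\|_H^{2p}\le C|t-s|^{2p\alpha}$ for every $\alpha<\tfrac14$ is classical (via the factorization method, see~\cite{DaPratoZabczyk:14}); since this term does not involve $\Psi_{\Delta t}$, it is identical to the one for~\eqref{eq:AC} and contributes only an additive constant to the final bound.

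The drift increment is where the non-Lipschitz polynomial nonlinearity enters and where the uniform-in-$\Delta t$ moment bounds are essential. I would decompose it as
\[
\int_s^t e^{(t-r)A}\Psi_{\Delta t}\bigl(X^{(\Delta t)}(r)\bigr)\,dr+\int_0^s e^{(s-r)A}\bigl(e^{(t-s)A}-I\bigr)\Psi_{\Delta t}\bigl(X^{(\Delta t)}(r)\bigr)\,dr.
\]
For the first integral I would use the contraction property $\|e^{rA}\|_{\mathcal L(H)}\le 1$ and the growth bound $|\Psi_{\Delta t}(z)|\le C(1+|z|^4)$ from Lemma~\ref{lem:Lip_Psi}, giving a pathwise bound of order $(t-s)\sup_r\bigl(1+|X^{(\Delta t)}(r)|_E^4\bigr)$, which is $\le C(t-s)^\alpha\sup_r\bigl(1+|X^{(\Delta t)}(r)|_E^4\bigr)$ because $t-s\le T$ and $\alpha<1$. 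For the second integral I would move the fractional power onto the semigroup, $\|e^{(s-r)A}(e^{(t-s)A}-I)\|_{\mathcal L(H)}\le C(t-s)^\alpha(s-r)^{-\alpha}$, and use $\int_0^s(s-r)^{-\alpha}\,dr\le CT^{1-\alpha}<\infty$ (here $\alpha<\tfrac14<1$ guarantees integrability), again producing a factor $(t-s)^\alpha\sup_r\bigl(1+|X^{(\Delta t)}(r)|_E^4\bigr)$. Raising to the power $2p$, taking expectation, and invoking Proposition~\ref{propo:aux} (with $p$ replaced by $4p$), uniformly for $\Delta t\in(0,\Delta t_0]$, bounds $\E\bigl[\sup_{0\le r\le T}\bigl(1+|X^{(\Delta t)}(r)|_E^4\bigr)^{2p}\bigr]$ by $C(1+|x_0|_E^{8p})$; this is the source of the $|x_0|_E$-dependence (degree $8$ when $p=1$, consistent with the $|x_0|_E^8$ term of Theorem~\ref{theo:main}), which I would absorb into the constant.

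Collecting the three estimates yields $\E\|X^{(\Delta t)}(t)-X^{(\Delta t)}(s)\|_H^{2p}\le C_{p,\alpha}(T)(1+|x_0|_E^{8p}+\|(-A)^\alpha x_0\|_H^{2p})|t-s|^{2p\alpha}$, whence the claim on a common subinterval. The only genuinely delicate point is that every constant must be uniform in $\Delta t$; this is not an obstacle here, since it reduces entirely to the uniform growth bound of Lemma~\ref{lem:Lip_Psi} and the uniform moment bound of Proposition~\ref{propo:aux}, both already established, together with the H\"older exponent of the stochastic convolution being capped at $\tfrac14$.
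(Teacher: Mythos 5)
Your proposal is correct and follows essentially the same route as the paper: the mild formulation, the three-way split into homogeneous term, stochastic convolution increment, and drift increment (with the drift further decomposed into the $[s,t]$ piece and the $[0,s]$ piece carrying the factor $\|e^{(s-r)A}(e^{(t-s)A}-I)\|_{\mathcal L(H)}\le C(t-s)^\alpha(s-r)^{-\alpha}$), concluded via the growth bound of Lemma~\ref{lem:Lip_Psi} and the uniform moment bound of Proposition~\ref{propo:aux}. Your additional remarks — that the honest bound is in $|t-s|^{2p\alpha}$ and specializes to $\Delta t^{2p\alpha}$ on a common subinterval, and that the drift term introduces a $|x_0|_E^{8p}$ dependence consistent with the $|x_0|_E^8$ appearing in Theorem~\ref{theo:main} — correctly fill in details the paper's sketch leaves implicit.
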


\begin{proof}[Proof of Lemma~\ref{lem:regul_time}]
We give a sketch of proof, and give details only for $p=1$.

Then, let $0\le s<t\le T$. Then
\begin{align*}
\E\|X^{(\Delta t)}(t)-X^{(\Delta t)}(s)\|_H^2&\le C(T)\|e^{tA}x_0-e^{sA}x_0\|_H^2\\
&+C(T)\E\|W^A(t)-W^A(s)\|_H^2\\
&+C(T)\int_{s}^{t}\E\|\Psi_{\Delta t}(X^{(\Delta t)}(r)\|^2dr\\
%&+C(T)\int_{0}^{s}\E\|\bigl(e^{(t-r)A}-e^{(s-r)A}\bigr)\Psi_{\Delta t}(X^{(\Delta t)}(r))\|_H^2dr.
&+C(T)\int_{0}^{s}\frac{(t-s)^{2\alpha}}{(s-r)^{2\alpha}}\E\|\Psi_{\Delta t}(X^{(\Delta t)}(r))\|_H^2dr.
\end{align*}
First, $\|e^{tA}x_0-e^{sA}x_0\|_H\le |t-s|^\alpha\|(-A)^\alpha x_0\|_H$.

The inequality $\E\|W^A(t)-W^A(s)\|_H^2\le C_\alpha(T)\le |t-s|^{2\alpha}$ is a standard result.

Finally, it remains to apply Lemma~\ref{lem:Lip_Psi}, and the moment bound from Proposition~\ref{propo:aux} to conclude.
\end{proof}

\subsubsection{Error between solutions of exact and auxiliary equations}
The following result states convergence of the $X^{(\Delta t)}$ to $X$ when $\Delta t$ goes to $0$. The order of convergence is $1$, and there is no need for localization.
\begin{propo}\label{propo:modif_error}
Let $T\in(0,\infty)$ and $\Delta t_0\in(0,1)$. There exists $C(T,\Delta t_0)\in(0,\infty)$ such that for all $x_0\in E$ and $\Delta t\in(0,\Delta t_0]$,
\[
\E\Bigl[\underset{0\le t\le T}\sup\|X^{(\Delta t)}(t)-X^{(0)}(t)\|_H^2\Bigr]\le C(T,\Delta t_0)\bigl(1+|x_0|_{E}^{10}\bigr)\Delta t^2.
\]
\end{propo}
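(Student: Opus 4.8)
The plan is to exploit the crucial fact that $X^{(\Delta t)}$ and $X^{(0)}=X$ are driven by the \emph{same} Wiener process $W$ and start from the same initial condition $x_0$. Writing both equations in mild form, the stochastic convolution $W^A$ then cancels in the difference $\rho(t)=X^{(\Delta t)}(t)-X^{(0)}(t)$, which satisfies
\[
\rho(t)=\int_0^t e^{(t-s)A}\bigl(\Psi_{\Delta t}(X^{(\Delta t)}(s))-\Psi_0(X^{(0)}(s))\bigr)\,ds,\qquad \rho(0)=0.
\]
Hence, for each fixed $\omega$, the process $\rho$ solves a deterministic parabolic equation $\dot\rho=A\rho+\Psi_{\Delta t}(X^{(\Delta t)})-\Psi_0(X^{(0)})$ with an $E$-valued forcing term and zero initial datum. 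This cancellation of the noise is precisely what allows an order-one, \emph{non-localized} estimate, in contrast with Theorem~\ref{theo:main}, and it also supplies enough parabolic smoothing to justify an energy estimate.

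The core step is a pathwise energy estimate. I would take the scalar product of the evolution equation with $\rho(t)$ in $H$, obtaining
\[
\frac{1}{2}\frac{d}{dt}\|\rho(t)\|_H^2=\langle A\rho(t),\rho(t)\rangle+\langle \Psi_{\Delta t}(X^{(\Delta t)}(t))-\Psi_0(X^{(0)}(t)),\rho(t)\rangle.
\]
The first term is nonpositive by dissipativity of $A$. For the nonlinear term I split it as $\bigl(\Psi_{\Delta t}(X^{(\Delta t)})-\Psi_{\Delta t}(X^{(0)})\bigr)+\bigl(\Psi_{\Delta t}(X^{(0)})-\Psi_0(X^{(0)})\bigr)$. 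Pairing the first bracket with $\rho=X^{(\Delta t)}-X^{(0)}$ and applying the one-sided Lipschitz condition of Lemma~\ref{lem:one-sided} pointwise in $\xi$ yields a bound $\le e^{\Delta t}\|\rho\|_H^2$. For the second bracket I use Cauchy--Schwarz together with the consistency estimate of Lemma~\ref{lem:errorPsi}, $|\Psi_{\Delta t}(z)-\Psi_0(z)|\le C(\Delta t_0)\Delta t(1+|z|^5)$, which after integration in $\xi$ gives $\|\Psi_{\Delta t}(X^{(0)})-\Psi_0(X^{(0)})\|_H\le C\Delta t(1+|X^{(0)}|_E^5)$; a Young inequality then absorbs the factor $\|\rho\|_H$ into $\tfrac12\|\rho\|_H^2$ at the cost of a term $C\Delta t^2(1+|X^{(0)}|_E^5)^2$.

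Collecting these bounds gives, uniformly in $\Delta t\in(0,\Delta t_0]$,
\[
\frac{d}{dt}\|\rho(t)\|_H^2\le C\|\rho(t)\|_H^2+C\Delta t^2\bigl(1+\sup_{0\le s\le T}|X^{(0)}(s)|_E^{10}\bigr).
\]
Since $\rho(0)=0$, the pathwise Gronwall lemma yields $\sup_{0\le t\le T}\|\rho(t)\|_H^2\le C(T,\Delta t_0)\Delta t^2\bigl(1+\sup_{0\le s\le T}|X^{(0)}(s)|_E^{10}\bigr)$. Taking expectation and invoking the moment bound of Proposition~\ref{propo:AC} with $p=5$, namely $\E[\sup_{0\le s\le T}|X^{(0)}(s)|_E^{10}]\le C_5(T)(1+|x_0|_E^{10})$, gives the claimed estimate with the exponent $10$ on $|x_0|_E$.

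The main obstacle is the rigorous justification of the energy identity, i.e.\ that $\langle A\rho,\rho\rangle$ is well defined and nonpositive. I would handle this either by a spectral Galerkin approximation (performing the estimate on finite-dimensional projections and passing to the limit) or by directly using the additional regularity of $\rho$ coming from the cancellation of $W^A$; every other step is routine. It is worth stressing that the one-sided Lipschitz condition, rather than a genuine Lipschitz constant (which would blow up polynomially), is what keeps the coefficient of $\|\rho\|_H^2$ bounded uniformly in $\Delta t$ and thus makes Gronwall's lemma applicable without any localization.
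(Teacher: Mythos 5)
Your proposal is correct and follows essentially the same route as the paper: the same energy estimate for the difference $R^{(\Delta t)}=X^{(\Delta t)}-X^{(0)}$ (in which the noise cancels), the same splitting of the nonlinear term handled by the one-sided Lipschitz condition of Lemma~\ref{lem:one-sided} and the consistency bound of Lemma~\ref{lem:errorPsi}, followed by Young's inequality, Gronwall's lemma, and the moment bound of Proposition~\ref{propo:AC}. Your additional remark on rigorously justifying the energy identity (e.g.\ by Galerkin approximation) addresses a point the paper leaves implicit.
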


\begin{proof}[Proof of Proposition~\ref{propo:modif_error}]
Let $R^{(\Delta t)}(t)=X^{(\Delta t)}(t)-X^{(0)}(t)$. Then% Since the noise perturbation in the SPDEs~\eqref{eq:AC} and~\eqref{eq:aux} is additive, and does not depend on $\Delta t$, one sees that $R^{(\Delta t)}$ is solution of
\[
dR^{(\Delta t)}(t)=AR^{(\Delta t)}(t)dt+\bigl(\Psi_{\Delta t}(X^{(\Delta t)}(t))-\Psi_{0}(X^{(0)}(t))\bigr)dt,
\]
with $R^{(\Delta t)}(0)=0$. As a consequence,
\begin{align*}
\frac{1}{2}\frac{d\|R^{(\Delta t)}(t)\|^2}{dt}&=\langle AR^{(\Delta t)}(t),R^{(\Delta t)}(t)\rangle+\langle \Psi_{\Delta t}(X^{(\Delta t)}(t))-\Psi_{0}(X^{(0)}(t)),R^{(\Delta t)}(t)\rangle\\
&\le \langle \Psi_{\Delta t}(X^{(\Delta t)}(t))-\Psi_{\Delta t}(X^{(0)}(t)),R^{(\Delta t)}(t)\rangle\\
&~+\|\Psi_{\Delta t}(X^{(0)}(t))-\Psi_{0}(X^{(0)}(t))\|\|R^{(\Delta t)}(t)\|
\end{align*}

Using the one-sided Lipschitz condition from Lemma~\ref{lem:one-sided}, and Lemma~\ref{lem:errorPsi}, then
\begin{align*}
\frac{1}{2}\frac{d\|R^{(\Delta t)}(t)\|^2}{dt}&\le \bigl(e^{\Delta t_0}+\frac{1}{2}\bigr)\|R^{(\Delta t)}(t)\|^2+\frac{1}{2}\|\Psi_{\Delta t}(X^{(0)}(t))-\Psi_{0}(X^{(0)}(t))\|^2\\
&\le C\|R^{(\Delta t)}(t)\|^2+C\Delta t^2\bigl(1+|X^{(0)}(t)|_E^{10}\bigr).
\end{align*}

Using Gronwall's lemma, and Proposition~\ref{propo:AC} then concludes the proof.
\end{proof}

\subsubsection{Proof of Theorem~\ref{theo:main}}

We are now in position to prove the main result of this article,  Theorem~\ref{theo:main}. Thanks to Proposition~\ref{propo:modif_error}, it is sufficient to look at the error $X_n-X^{(\Delta t)}(n\Delta t)$.

Define $r_n=X_n-\omega_n$. Note that
\[
X_{n+1}-S_{\Delta t}X_n-\Delta tS_{\Delta t}\Psi_{\Delta t}(X_n)=\omega_{n+1}-S_{\Delta t}\omega_n,
\]
thus
\[
r_{n+1}=S_{\Delta t}r_n+\Delta tS_{\Delta t}\Psi_{\Delta t}(X_n).
\]
This identity yields (since $r_0=X_0=x_0$)
\[
X_n-\omega_n=r_n=S_{\Delta t}^nx_0+\Delta t\sum_{k=0}^{n-1}S_{\Delta t}^{n-k}\Psi_{\Delta t}(X_k).
\]

We are now in position to make precise the decomposition of the error:
\begin{align*}
X_n-X^{(\Delta t)}(n\Delta t)&=\bigl(S_{\Delta t}^n-e^{n\Delta t}\bigr)x_0+\omega_n-W^A(n\Delta t)\\
&+\sum_{k=0}^{n-1}\int_{k\Delta t}^{(k+1)\Delta t}\bigl[S_{\Delta t}^{n-k}\Psi_{\Delta t}(X_k)-e^{(n\Delta t-t)A}\Psi_{\Delta t}(X^{(\Delta t)}(t))\bigr]dt.
\end{align*}

First, there exists $C_\alpha\in(0,\infty)$, such that for all $n\in\N$,
\[
\|\bigl(S_{\Delta t}^n-e^{n\Delta t}\bigr)x_0\|_H\le C_\alpha \Delta t^\alpha\|(-A)^\alpha x_0\|_H.
\]
In addition, the error term $\omega_n-W^A(n\Delta t)$ is controlled thanks to Lemma~\ref{lem:error_noise}.

It remains to deal with
\begin{align*}
\sum_{k=0}^{n-1}\int_{k\Delta t}^{(k+1)\Delta t}&\bigl[S_{\Delta t}^{n-k}\Psi_{\Delta t}(X_k)-e^{(n\Delta t-t)A}\Psi_{\Delta t}(X^{(\Delta t)}(t))\bigr]dt\\
&=\Delta t\sum_{k=0}^{n-1}S_{\Delta t}^{n-k}\bigl[\Psi_{\Delta t}(X_k)-\Psi_{\Delta t}(X^{(\Delta t)}(k\Delta t))\bigr]\\
&+\sum_{k=0}^{n-1}\int_{k\Delta t}^{(k+1)\Delta t}S_{\Delta t}^{n-k}\bigl[\Psi_{\Delta t}(X^{(\Delta t)}(k\Delta t)-\Psi_{\Delta t}(X^{(\Delta t)}(t))\bigr]dt\\
&+\sum_{k=0}^{n-1}\int_{k\Delta t}^{(k+1)\Delta t}\bigl[S_{\Delta t}^{n-k}-e^{(n\Delta t-t)A}\bigr]\Psi_{\Delta t}(X^{(\Delta t)}(t))dt
\end{align*}

Since $\Psi_{\Delta t}$ is not globally Lipschitz continuous uniformly in $\Delta t\in(0,\Delta t_0)$, a localization argument is introduced.

For $M\in\N$, and $n\in\left\{0,\ldots,N_{T,\Delta t}\right\}$, let
\[
\Omega_{n,M}^{(\Delta t)}=\left\{\underset{0\le k\le n}\sup|X_k|_E+\underset{0\le t\le n\Delta t}\sup|X^{(\Delta t)}(t)|_E\le M\right\}.
\]
Note that for all $k\in\left\{0,\ldots,n-1\right\}$, $0\le \mathds{1}_{\Omega_{n,M}^{(\Delta t)}}\le \mathds{1}_{\Omega_{k,M}^{(\Delta t)}}\le 1$.

Let $\epsilon_n=\mathds{1}_{\Omega_{n,M}^{(\Delta t)}}\|X_n-X^{(\Delta t)}(n\Delta t)\|_H^2$. Then
\begin{align*}
\E\bigl[\underset{0\le m\le n}\sup\epsilon_m\bigr]&\le C\sup_{0\le m\le n}\|\bigl(S_{\Delta t}^m-e^{m\Delta t}\bigr)x_0\|_H^2+C\E\bigl[\sup_{0\le m\le n}\|\omega_m-W^A(m\Delta t)\|_H^2\bigr]\\
&+CT\Delta t \sum_{k=0}^{n-1}\E\bigl[\mathds{1}_{\Omega_{k,M}^{(\Delta t)}}\|\Psi_{\Delta t}(X_k)-\Psi_{\Delta t}(X^{(\Delta t)}(k\Delta t))\|^2\bigr]\\
&+CT\sum_{k=0}^{n-1}\int_{k\Delta t}^{(k+1)\Delta t}\E\|\Psi_{\Delta t}(X^{(\Delta t)}(k\Delta t)-\Psi_{\Delta t}(X^{(\Delta t)}(t))\|^2dt\\
&+CT\sum_{k=0}^{n-1}\int_{k\Delta t}^{(k+1)\Delta t}\E\big\|\bigl[S_{\Delta t}^{n-k}-e^{(n\Delta t-t)A}\bigr]\Psi_{\Delta t}(X^{(\Delta t)}(t))\big\|_H^2dt\\
&\le C_{\alpha}(T)(1+\|(-A)^\alpha x_0\|_H^2)\Delta t^{2\alpha}\\
&+C(1+M^6)T\Delta t \sum_{k=0}^{n-1}\E[\underset{0\le m\le k}\sup\epsilon_m]\\
&+C_\alpha(T)(1+M^6)\Delta t^{2\alpha}\\
&+C_\alpha(T)(1+|x_0|_E^8)\Delta t^{2\alpha}.
\end{align*}
We have first used the estimate above and Lemma~\ref{lem:error_noise}. Moreover, if $x_1,x_2\in E$ satisfy $|x_1|_E\le M,~|x_2|_E\le M$, then
\[
\|\Psi_{\Delta t}(x_2)-\Psi_{\Delta t}(x_1)\|_H\le C(\Delta t_0)(1+M^3)\|x_2-x_1\|_H.
\]
Thus
\begin{align*}
\E\bigl[\mathds{1}_{\Omega_{k,M}^{(\Delta t)}}\|\Psi_{\Delta t}(X_k)-\Psi_{\Delta t}(X^{(\Delta t)}(k\Delta t))\|^2\bigr]&\le C(1+M^6) \E\bigl[\mathds{1}_{\Omega_{k,M}^{(\Delta t)}}\|X_k-X^{(\Delta t)}(k\Delta t)\|^2\bigr]\\
&\le C(1+M^6)\E[\underset{0\le m\le k}\sup\epsilon_m].
\end{align*}

Finally, we have used Lemma~\ref{lem:regul_time}, and the standard estimates to control $\|S_{\Delta t}^{n-k}-e^{(n\Delta t-t)A}\|_{\mathcal{L}(H)}$.

Applying the discrete Gronwall's Lemma,
\[
\E\bigl[\underset{0\le m\le n}\sup\epsilon_m\bigr]\le C_\alpha(T,M)\Delta t^{2\alpha}(1+|x_0|_E^8+\|(-A)^\alpha x_0\|_H^2),
\]
with $K_\alpha(T,M)\le C_\alpha(T)(1+M^6)\exp(C_\alpha(T)M^6 T)$.

To conclude, note that $\Omega_{N_{T,\Delta t},M}^{(\Delta t)}=\Omega_{M}^{(\Delta t)}(T)$, and that
\[
\E\bigl[\mathds{1}_{\Omega_{N_{T,\Delta t},M}^{(\Delta t)}}\sup_{0\le n\le N_{T,\Delta t}}\|X_n-X^{(\Delta t)}(n\Delta t)\|_H^2]\le \E[\underset{0\le n\le N_{T,\Delta t}}\sup\epsilon_n].
\]

In addition,
\[
1-\PP\bigl(\Omega_{N_{T,\Delta t},M}^{(\Delta t)}\bigr)\le \frac{\E\bigl[\underset{0\le k\le N_{T,\Delta t}}\sup|X_k|_E+\underset{0\le t\le T}\sup|X^{(\Delta t)}(t)|_E\bigr]}{M}\le \frac{C(T,\Delta t_0)(1+|x_0|_E)}{M},
\]
thanks to Propositions~\ref{propo:aux} and~\ref{propo:moment}.

These estimates, combined with Proposition~\ref{propo:modif_error}, conclude the proof of Theorem~\ref{theo:main}.

%{\color{blue}
%}
\subsection{Proof of the Corollaries}\label{sec:convergence_proof2}

\begin{proof}[Proof of Corollary~\ref{cor:1}]
Let $M$ be an arbitrary integer. Then, for all $\Delta t\in(0,\Delta t_0]$,
\begin{align*}
\E\bigl[\sup_{0\le n\le N_{T,\Delta t}}\|X_n-X(n\Delta t)\|_H^2]&=
\E\bigl[\mathds{1}_{\Omega_{M}^{(\Delta t)}(T)}\sup_{0\le n\le N_{T,\Delta t}}\|X_n-X(n\Delta t)\|_H^2]\\
&~+\E\bigl[\mathds{1}_{\Omega_{M}^{(\Delta t)}(T)^c}\sup_{0\le n\le N_{T,\Delta t}}\|X_n-X^{(\Delta t)}(n\Delta t)\|_H^2]\\
&\le C_\alpha(M,T,\Delta t_0,x)\Delta t^{2\alpha}\\
&~+\PP\bigl(\Omega_{M}^{(\Delta t)}(T)^c\bigr)^{\frac12}\bigl(\E\bigl[\sup_{0\le n\le N_{T,\Delta t}}\|X_n\|_H^4+\|X(n\Delta t)\|_H^4]\bigr)^{\frac12}\\
&\le C_\alpha(M,T,\Delta t_0,x)\Delta t^{2\alpha}+\frac{C_\alpha(T,\Delta t_0,x)}{\sqrt{M}},
\end{align*}
thanks to Theorem~\ref{theo:main}, Cauchy-Schwarz inequality, and Propositions~\ref{propo:AC} and~\ref{propo:moment}. Thus, letting first $\Delta t\to 0$, then $M\to \infty$, yields
\[
\underset{\Delta t\to 0}\limsup~\E\bigl[\sup_{0\le n\le N_{T,\Delta t}}\|X_n-X(n\Delta t)\|_H^2]\le \frac{C_\alpha(T,\Delta t_0,x)}{\sqrt{M}}\underset{M\to\infty}\to 0,
\]
which concludes the proof of Corollary~\ref{cor:1}.
\end{proof}

\begin{proof}[Proof of Corollary~\ref{cor:2}]
Let $K\in(0,\infty)$ be an arbitrary positive real number, and $M\in\N$ be an arbitrary integer. 
%For notational simplicity, let $E_\alpha(K,\Delta t,T)=\left\{\sup_{0\le n\le N_{T,\Delta t}}\|X_n-X^{(\Delta t)}(n\Delta t)\|_H^2\ge K\Delta t^\alpha\right\}$.
Let $\tilde{\alpha}\in(\alpha,\frac{1}{4})$. Then
\begin{align*}
\PP\Bigl(\sup_{0\le n\le N_{T,\Delta t}}&\|X_n-X(n\Delta t)\|_H\ge K\Delta t^\alpha\Bigr)\le \PP\Bigl(\Omega_{M}^{(\Delta t)}(T)^c\Bigr)\\ &+\PP\Bigl(\left\{\sup_{0\le n\le N_{T,\Delta t}}\|X_n-X(n\Delta t)\|_H\ge K\Delta t^\alpha\right\} \cap \Omega_{M}^{(\Delta t)}(T) \Bigr)\\
&\le \frac{C_\alpha(T,\Delta t_0,x)}{M}+\frac{1}{K^2\Delta t^{2\alpha}}\E\bigl[\mathds{1}_{(\Omega_{M}^{(\Delta t)}(T)}\sup_{0\le n\le N_{T,\Delta t}}\|X_n-X(n\Delta t)\|_H^2\bigr]\\
&\le \frac{C_\alpha(T,\Delta t_0,x)}{M}+\frac{1}{K^2}K_{\tilde{\alpha}}(M,T,\Delta t_0)\Delta t^{2(\tilde{\alpha}-\alpha)},
\end{align*}
thanks to Theorem~\ref{theo:main}. Thus, letting first $\Delta t\to 0$, then $M\to \infty$, yields
\[
\underset{\Delta t\to 0}\limsup~\PP\Bigl(\sup_{0\le n\le N_{T,\Delta t}}\|X_n-X(n\Delta t)\|_H\ge K\Delta t^\alpha\Bigr)\le \frac{C_\alpha(T,\Delta t_0,x)}{M}\underset{M\to\infty}\to 0,
\]
which concludes the proof of Corollary~\ref{cor:2}.
\end{proof}

\begin{proof}[Proof of Corollary~\ref{cor:3}]
Let $\epsilon\in(0,1)$, and $M\in\N$ be such that, for all $\Delta t\in(0,\Delta t_0]$,
\[
\PP\bigl(\Omega_{M}^{(\Delta t)}(T)^c\bigr)\le \frac{C(T,\Delta t_0)(1+|x_0|_E)}{M}\le \epsilon.
\]
Then
\begin{align*}
\big|\E\bigl[\varphi(X(T))\bigr]-\E\bigl[\varphi(X_{N_{T,\Delta t}})\mathds{1}_{\Omega_{M}^{(\Delta t)}(T)}\bigr]\big|&\le \big|\E\bigl[\bigl(\varphi(X(T))-\varphi(X_{N_{T,\Delta t}})\bigr)\mathds{1}_{\Omega_{M}^{(\Delta t)}(T)}\bigr]\big|\\
&+\E\bigl[\varphi(X(T))\mathds{1}_{\Omega_{M}^{(\Delta t)}(T)^c}\bigr]\\
&\le \E\bigl[\big\|X(T)-X_{N_{T,\Delta t}}\big\|_H\mathds{1}_{\Omega_{M}^{(\Delta t)}(T)}\bigr]+\epsilon\\
&\le C_\alpha(T,\Delta t_0,x)\Delta t^\alpha+\epsilon,
\end{align*}
thanks to the assumption that $\varphi$ is bounded and Lipschitz continuous, with $\|\varphi\|_\infty+{\rm Lip}(\varphi)\le 1$, and to Theorem~\ref{theo:main}. This concludes the proof of Corollary~\ref{cor:3}.
\end{proof}

\section{Numerical experiments}\label{sec:num-expe}

This section is devoted to numerical simulations, in order to investigate the properties of the numerical scheme~\eqref{eq:scheme_Gamma}, with the choice $\Gamma=\Gamma^{\rm imp}$ of the linear implicit Euler scheme:
\begin{equation}\label{eq:scheme_expe}
\begin{cases}
Y_{n}=\Phi_{\Delta t}(X_n),\\
X_{n+1}=S_{\Delta t}Y_n+S_{\Delta t}\bigl(W((n+1)\Delta t)-W(n\Delta t)\bigr),
\end{cases}
\end{equation}
with $S_{\Delta t}=(I-\Delta tA)^{-1}$. All simulations are performed with this choice of integrator. Indeed, we expect that there is no gain in the orders of convergence when using the version $\Gamma=\Gamma^{\rm exp}$, with $S_{\Delta t}=e^{\Delta t A}$. In addition, computing such exponential operators may be expensive in more complex situations, for instance where eigenvalues and eigenfunctions of $A$ are not explicitly known, or in higher dimensional domains. It is thus natural to restrict our simulations to the linear implicit Euler scheme.

Spatial discretization is performed using a standard finite differences scheme, with a fixed mesh size. The dependence of the error with respect to this spatial discretization parameter is not studied in this article: we only focus on the temporal discretization error.

Variants of the scheme~\eqref{eq:scheme_expe} are introduced below, in Section~\ref{sec:num-expe_schemes}. They are based on other splitting strategies. The numerical simulations allow us to compare the orders of convergence of these methods.

First, in Section~\ref{sec:num-expe_strong}, strong orders of convergence of the schemes are compared. We observe that in practice the result of Theorem~\ref{theo:main} holds true without requiring the introduction of the set $\Omega_M^{(\Delta t)}(T)$, and that all the methods are expected to have the same order of convergence, equal to $1/4$. We conjecture that the strong order of convergence of the scheme~\eqref{eq:scheme_expe} is equal to $1/4$.

Second, in Section~\ref{sec:num-expe_weak}, weak orders of convergence of the schemes are compared. Note that the rejection of exploding trajectories, as suggested by Corollary~\ref{cor:3}, is not performed: we may take $\epsilon=0$. Moreover, the test function is of class $\mathcal{C}^2$, bounded and with bounded derivatives. In addition, one of the alternative splitting schemes defined below has a lower weak error. Based on these numerical simulations, we conjecture that the weak order of convergence is then equal to $1/2$ for the scheme~\eqref{eq:scheme_expe}. This question will be studied in future works.% In addition, we observe that one of the variants defined below seems to have an improved weak order of convergence, equal to $1$.

We also plan to study generalizations in higher dimension.

\subsection{Variants of the numerical scheme~\eqref{eq:scheme_expe}}\label{sec:num-expe_schemes}

We define three numerical schemes, for each value of the time-step size $\Delta t>0$. We recall that $S_{\Delta t}=(I-\Delta tA)^{-1}$, and use the notation $\Delta W_n=W((n+1)\Delta t)-W(n\Delta t)$ for Wiener increments.

Method~1, given by the scheme~\eqref{eq:M1}, corresponds with the scheme studied above,~\eqref{eq:scheme_Gamma}, with the linear implicit Euler integrator. The definition of Method~3, given by the scheme~\eqref{eq:M3}, is motivated by~\cite{KovacsLarssonLindgren:15_2}. The numerical experiments below show that the error is reduced when using this scheme, but the order of convergence seems to be the same. Finally, the definition of Method~3 is motivated by~\cite{BrehierGazeauGoudenegeRousset:15}. We have checked that the three variants give consistent results. In addition, the observations are stable with respect to the choice of the mesh size.

\subsubsection*{Method~1}

\begin{equation}\label{eq:M1}
\begin{cases}
Y_{n}^1=\Phi_{\Delta t}(X_n^1),\\
X_{n+1}^1=S_{\Delta t}X_n^1+S_{\Delta t}\Delta W_n,
\end{cases}
\end{equation}

\subsubsection*{Method~2}

\begin{equation}\label{eq:M3}
\begin{cases}
Y_n^{2,1}=S_{\frac{\Delta t}{2}}X_n^{2}\\
Y_n^{2,2}=\Phi_{\Delta t}(Y_n^{2,1})\\
X_{n+1}^2=S_{\frac{\Delta t}{2}}\bigl(Y_n^{2,2}+\Delta W_n\bigr).
\end{cases}
\end{equation}

\subsubsection*{Method~3}

\begin{equation}\label{eq:M6}
\begin{cases}
Y_n^{3,1}=S_{\frac{\Delta t}{2}}\bigl(X_n^3+\frac{1}{2}\Delta W_n\bigr)\\
Y_n^{3,2}=\Phi_{\Delta t}(Y_n^{3,1})\\
X_{n+1}^3=S_{\frac{\Delta t}{2}}\bigl(Y_n^{3,2}+\frac{1}{2}\Delta W_n\bigr).
\end{cases}
\end{equation}

\begin{rem}
Using different splitting strategies yields other numerical schemes. For instance, we may have considered the scheme defined by
\[
\begin{cases}
Y_n^1=\Phi_{\frac{\Delta t}{2}}(X_n)\\
Y_n^2=S_{\Delta t}\bigl(Y_n^1+\Delta W_n)\\
X_{n+1}=\Phi_{\frac{\Delta t}{2}}(Y_n^2).
\end{cases}
\]
Numerical experiments for this scheme are not reported, since they do not differ from Method~1.
\end{rem}

\subsection{Strong convergence}\label{sec:num-expe_strong}

In order to study the strong order of convergence, one needs to compare trajectories computed using the same Wiener path, which constraints the construction of the associated Wiener increments. It is customary to compare the numerical solution computed with time-step size $\Delta t$, with a reference solution computed using a much smaller time-step size. Instead, we estimate the mean-square error
\[
\E\|X_{N}^{(\Delta t)}-X_{2N}^{(\frac{\Delta t}{2})}\|^2
\]
where $X^{(\Delta t)}_{N}$ is the numerical scheme, with time-step size $\Delta t$, and $N\Delta t=T$. The solutions are computed using the same Wiener path for one value of $\Delta t$, and using independent Wiener paths when changing $\Delta t$. One needs to check that this error is bounded from above by $C_\alpha(T)\Delta t^{2\alpha}$: by a telescoping sum argument, since by Corollary~\ref{cor:2} the scheme is mean-square convergent, this property is equivalent to a standard error estimate
\[
\E\|X_{N}^{(\Delta t)}-X(T)\|^2\le C_\alpha '(T)\Delta t^{2\alpha}.
\]
In addition, note that the order of convergence of the error $\E\|X_{N}^{(\Delta t)}-X_{2N}^{(\frac{\Delta t}{2})}\|^2$ is exactly what matters in the analysis of Multilevel Monte Carlo algorithms, which are used in the context of weak convergence -- see Section~\ref{sec:num-expe_weak} below.

The simulations are performed with $T=1$, a mesh size $\Delta x=2.5~10^{-4}$, and computing Monte-Carlo averages over $10^5$ independent realizations. The numerical results, in logarithmic scale, are reported in Figure~\ref{fig:strong}. We observe that the mean-square error converges with order $2\alpha=1/2$, for the three methods.

\begin{figure}
\includegraphics[scale=0.8]{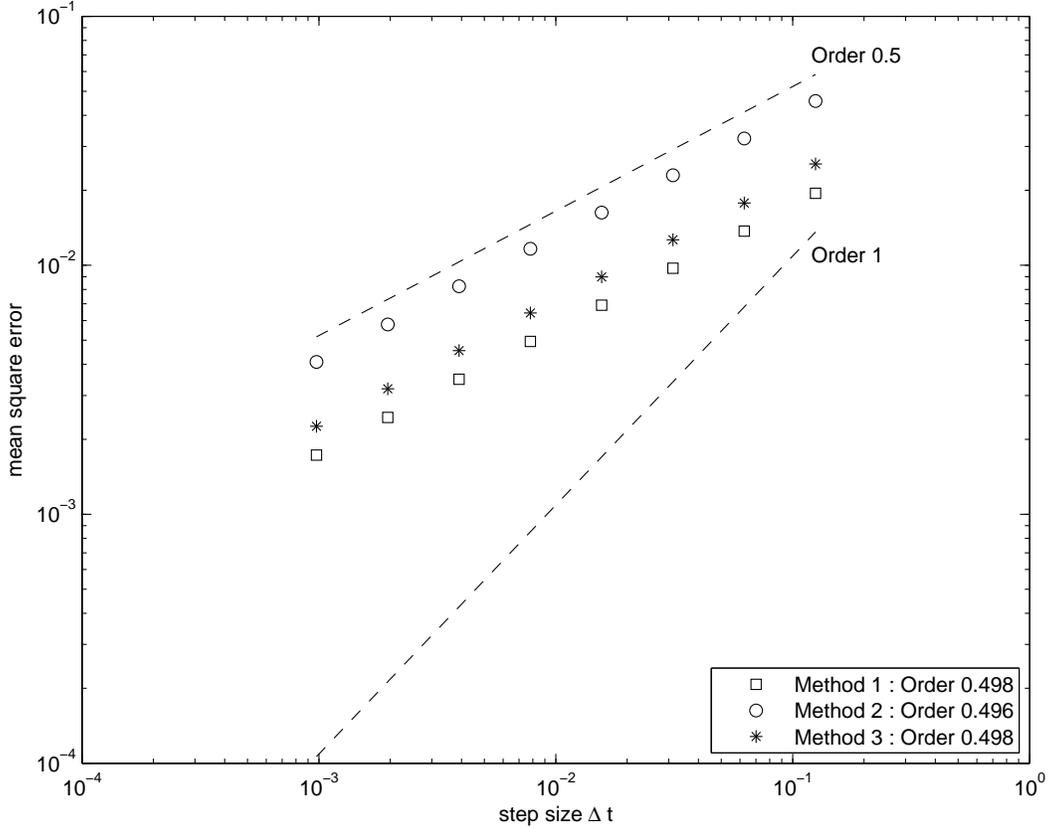}
\caption{Mean square error order for $T=1$, $\Delta x=2.5~10^{-4}$ and $10^{5}$ independent realizations.}
\label{fig:strong}
\end{figure}

\subsection{Weak convergence}\label{sec:num-expe_weak}

Weak orders of convergence deal to the behavior of the error
\[
\E\left[\varphi\left(X_N^{(\Delta t)}\right)\right]-\E\bigl[\varphi(X(T))\bigr]
\]
where $\varphi:H\to \R$ is a test function, with appropriate regularity properties. Precisely, in the numerical experiments below, the test function is given by
\[
\varphi(x)=\exp\bigl(-5\|x\|_H^2\bigr)
\]
which is of class $\mathcal{C}^2$, bounded and with bounded derivatives. Our aim is to check that the weak order of convergence is equal to $2\alpha=1/2$, where $\alpha=1/4$ is the strong order.

Experiments to identify weak rates of convergence are plagued by statistical error, and thus we need to use a variance reduction strategy. Instead of directly comparing $\E\bigl[\varphi(X_N^{(\Delta t)})\bigr]$ with a reference value, estimated by an independent Monte Carlo experiment with much smaller time step, we use a form of Multilevel Monte Carlo method. Precisely, we estimate (by a standard Monte Carlo average) the error
\[
\E\left[\varphi\left(X_N^{(\Delta t)}\right)\right]-\E\left[\varphi\left(X_{2N}^{(\frac{\Delta t}{2})}\right)\right]
\]
using the same Wiener paths (as explained in Section~\ref{sec:num-expe_strong}), but different time-step sizes, respectively $\Delta t$ and $\frac{\Delta t}{2}$. Between two successive levels, the time-step size $\Delta t$ is decreased, and computations at different levels use independent Wiener paths. Contrary to the standard Multilevel Monte Carlo strategy, the number of realizations per level is not optimized (it is the same at each level): still the computational cost is significantly reduced (thanks to the strong convergence property checked in Section~\ref{sec:num-expe_strong}), and the observation of the weak orders of convergence is improved a lot.

The comparison of $\E\left[\varphi\left(X_N^{(\Delta t)}\right)\right]$ with a reference value $\E\left[\varphi\left(X_N^{(\frac{\Delta t}{2^K})}\right)\right]$ estimated with a smaller time step $\frac{\Delta t}{2^K}$ is performed using a straightforward telescoping sum procedure.

The simulations are performed with $T=1$, $\Delta x=2.5~10^{-4}$, and with $10^5$ independent Monte Carlo realizations at each level. The numerical results, in logarithmic scale, are reported in Figure~\ref{fig:weak}. We observe that the weak error converges with order $2\alpha=1/2$ for three methods, and that Method~3 seems more efficient.

\begin{figure}
\includegraphics[scale=0.8]{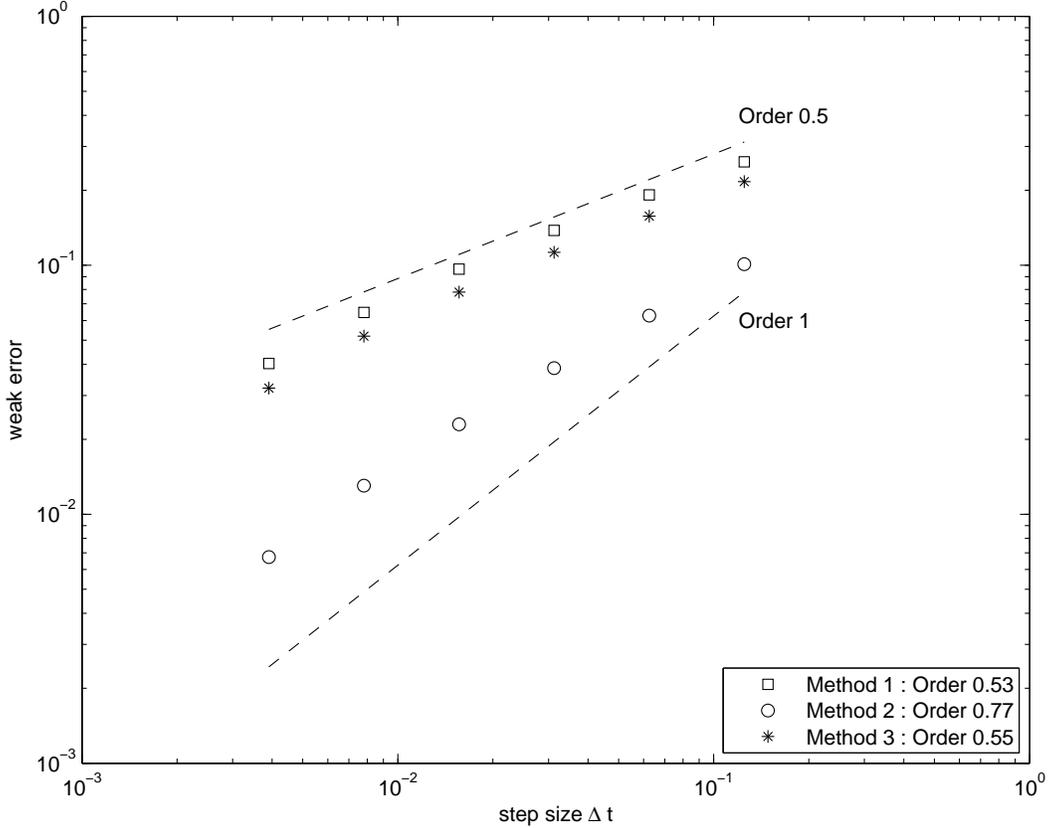}
\caption{Weak error order for $T=1$, $\Delta x=2.5~10^{-4}$ and $10^{5}$ independent realizations.}
\label{fig:weak}
\end{figure}

%{\color{red}
%
%\subsection{Strong convergence}
%
%The strong order can be computed using a reference solution obtained with a very small time-step.
%
%We present the results obtained by the schemes presented in Section \ref{sec:} in Figure \ref{fig:strong}
%
%\begin{figure}[!htp]
%\includegraphics[scale=0.8]{2000_5_12_MET.eps}
%\caption{Strong order}
%\label{fig:strong}
%\end{figure}
%
%\subsection{Weak convergence}
%
%The weak order is based on a test function $\phi$ of class $\mathcal{C}^{3}$ by computing the decreasing of \(\E[\phi(X_{n}^{*}) - \phi(X_{n\Delta t})]\). For our test case, we have chosen
%\[
%u\mapsto \exp(-\|u\|_{L^2}^2)
%\]
%We present the results obtained by the presented schemes in Figure \ref{fig:weak}
%
%
%\begin{figure}[!htp]
%\includegraphics[scale=0.8]{2000_5_12_MET.eps}
%\caption{Weak order}
%\label{fig:weak}
%\end{figure}
%}

%\newpage
\bibliographystyle{abbrv}
\bibliography{refAC}

\begin{appendix}
\section{Proof of auxiliary results}\label{appendix}

Let $\Delta t_0\in(0,1)$, and $\Delta t\in(0,1)$. Note that the properties are straightforward when $\Delta t=0$: then $\Phi_0(z)=z$ and $\Psi_0(z)=z-z^3$.

Recall (see~\eqref{eq:defPhi} and~\eqref{eq:defPsi}) that, for all $\Delta t\ge 0$ and $z\in\R$,
\[
\Phi_{\Delta t}(z)=\frac{z}{\sqrt{z^2+(1-z^2)e^{-2\Delta t}}}=\frac{z}{\sqrt{e^{-2\Delta t}+(1-e^{-2\Delta t})z^2}}~,\quad \Psi_{\Delta t}(z)=\frac{\Phi_{\Delta t}(z)-z}{\Delta t}.
\]

\begin{proof}[Proof of Lemma~\ref{lem:Lip_Phi}]
The mapping $\Phi_{\Delta t}$ is of class $\mathcal{C}^1$, and for all $z\in\R$,
\[
\frac{d}{dz}\Phi_{\Delta t}(z)=\frac{e^{-2\Delta t}}{\bigl(e^{-2\Delta t}+(1-e^{-2\Delta t})z^2\bigr)^{3/2}}\in [0,e^{\Delta t}].
\]
The conclusion is straightforward.
\end{proof}

\begin{proof}[Proof of Lemma~\ref{lem:one-sided}]
We claim that, for all $z\in\R$,
\[
\frac{d\Psi_{\Delta t}(z)}{dz}\le e^{\Delta t}.
\]
To get this estimate, first compute
\begin{align*}
\frac{d\Psi_{\Delta t}(z)}{dz}&=\frac{d}{dz}\Bigl(\frac{z}{\Delta t}\bigl(\frac{1}{\sqrt{e^{-2\Delta t}+(1-e^{-2\Delta t})z^2}}-1\bigr)\Bigr)\\
&=\frac{1}{\Delta t}\bigl(\frac{e^{-2\Delta t}}{\bigl[e^{-2\Delta t}+(1-e^{-2\Delta t})z^2\bigr]^{3/2}}-1\bigr)\\
&=\frac{1}{\Delta t}\bigl(f_z(\Delta t)-f_z(0)\bigr),
\end{align*}
where, for all $t\ge 0$ and $z\in\R$,
\[
f_z(t)=\frac{e^{-2t}}{\bigl[z^2+(1-z^2)e^{-2t}\bigr]^{3/2}}.
\]

Then, for fixed $z\in\R$, and all $t\ge 0$, compute
\begin{align*}
f_z'(t)&=f_z(t)\Bigl(-2+\frac{3(1-z^2)e^{-2t}}{z^2+(1-z^2)e^{-2t}}\Bigr)\\
&=f_z(t)-\frac{3z^2f_z(t)}{z^2+(1-z^2)e^{-2t}}\\
&\le f_z(t).
\end{align*}
Thanks to Gronwall's Lemma, and $f_z(0)=1$, one gets for all $t\ge 0$, and fixed $z\in\R$,
\[
f_z'(t)\le f_z(t)\le e^t.
\]
Then, for all $z\in\R$,
\[
\frac{d\Psi_{\Delta t}(z)}{dz}=\frac{f_z(\Delta t)-f_z(0)}{\Delta t}\le e^{\Delta t}
\]
which concludes the proof of the claim. Concluding the proof of Lemma~\ref{lem:one-sided} is then straightforward.
\end{proof}

\begin{proof}[Proof of Lemma~\ref{lem:Lip_Psi}]
Following the computations from the proof of Lemma~\ref{lem:one-sided} above, for all $z\in\R$,
\[
\left|\frac{d\Psi_{\Delta t}(z)}{dz}\right|=\frac{1}{\Delta t}\big|f_z(\Delta t)-f_z(0)\big|\le 3e^{3\Delta t}(1+|z|^2).
\]
Indeed, for fixed $z\in\R$, and $t\ge 0$, using that $f_z(t)\in[0,e^{t}]$,
\[
\big|f_z'(t)\big|\le f_z(t)+\frac{3|z|^2f_z(t)}{e^{-2t}+(1-e^{-2t})|z|^2}\le e^{t}+3|z|^2e^{3t}.
\]
This concludes the proof of the first estimate. The second estimate is straightforward, since $\Psi_{\Delta t}(0)=0$.
\end{proof}

\begin{proof}[Proof of Lemma~\ref{lem:errorPsi}]
Let $z\in\R$ be fixed. Note that
\[
\Psi_{\Delta t}(z)-\Psi_0(z)=\frac{z}{\Delta t}\bigl(g_z(\Delta t)-g_z(0)-\Delta tg_z'(0)\bigr)
\]
with $g_z(t)=\frac{1}{\sqrt{z^2+(1-z^2)e^{-2t}}}$, and $g_z'(0)=1-z^2$.

The derivatives of $g_z$ satisfy, for all $t\ge 0$,
\begin{align*}
g_z'(t)&=g_z(t)\Bigl(1-\frac{z^2}{z^2+(1-z^2)e^{-2t}}\Bigr)\\
g_z''(t)&=g_z(t)\Bigl[\Bigl(1-\frac{z^2}{z^2+(1-z^2)e^{-2t}}\Bigr)^2-\frac{z^2(1-z^2)e^{-2t}}{(z^2+(1-z^2)e^{-2t})^2}\Bigr].
\end{align*}

Note that $z^2+(1-z^2)e^{-2t}=e^{-2t}+(1-e^{-2t})z^2\ge e^{-2t}\ge e^{-2\Delta t_0}$, when $0\le t\le \Delta t \le \Delta t_0$. Then it is straightforward to check that for all $t\in[0,\Delta t]$
\[
|g_z(t)|\le e^{\Delta t_0}~,\quad |g_z'(t)|\le e^{\Delta t_0}(1+e^{2\Delta t_0}|z|^2)~,\quad |g_z''(t)|\le 2e^{\Delta t_0}\bigl(1+e^{2\Delta t_0}|z|^2\bigr)^2.
\]
Thus $|g_z''(t)|\le C(\Delta t_0)(1+|z|^4)$, and applying Taylor's formula then concludes the proof.
\end{proof}

\end{appendix}

\end{document}